\newtheorem{theorem}{Theorem}
\newtheorem{proposition}[theorem]{Proposition}%
\newtheorem{example}{Example}%
\newtheorem{definition}{Definition}%
\newcommand{\Zc}{\mathcal{Z}}
\newcommand{\Fc}{\mathcal{F}}
\newcommand{\Lc}{\mathcal{L}}
\newcommand{\Ic}{\mathcal{I}}
\newcommand{\Jc}{\mathcal{J}}
\newcommand{\Ac}{\mathcal{A}}
\newcommand{\Bc}{\mathcal{B}}
\newcommand{\Sc}{\mathcal{S}}
\newcommand{\Xc}{\mathcal{X}}
\newcommand{\Yc}{\mathcal{Y}}
\newcommand{\Nc}{\mathcal{N}}
\newcommand{\Xf}{\mathfrak{X}}
\newcommand{\Rb}{\mathbb{R}}
\newcommand{\Eb}{\mathbb{E}}
\newcommand{\Ab}{\mathbb{A}}
\newcommand{\onen}{\mathbf{1}}
\newcommand{\one}{\mathbbm{1}}
\newcommand{\sys}{\text{sys}}
\newcommand{\lin}{\text{lin}}
\newcommand{\Dc}{\mathcal{D}}
\renewcommand{\kappa}{\varkappa}
\DeclareMathOperator{\avar}{AVaR} 
\DeclareMathOperator{\icx}{icx}
\newenvironment{tightlist}[1]{%
    \list{{\rm(\roman{enumi})}}{\settowidth\labelwidth{{\rm(#1)}}
    \leftmargin\labelwidth \advance\leftmargin\labelsep
    \parsep 0pt plus 1pt minus 1pt \topsep 0pt \itemsep 0pt
    \usecounter{enumi}}}{\endlist}
\newenvironment{tightitemize}{%
    \list{{\textup{$\bullet$}}}{\settowidth\labelwidth{{\textup{\qquad}}}
    \leftmargin\labelwidth \advance\leftmargin\labelsep
    \parsep 0pt plus 1pt minus 1pt \topsep 3pt \itemsep 3pt
     }}{\endlist}
\title{Fair Risk Optimization of Distributed Systems}
\author{Aray Almen \\
        Department of Mathematical sciences \\
        Stevens Institute of Technology \\
        Hoboken, NJ 07030, USA \\
        \texttt{aalmen@stevens.edu} \\
        \And 
        Darinka Dentcheva \\
        Department of Mathematical sciences \\
        Stevens Institute of Technology \\
        Hoboken, NJ 07030, USA \\
        \texttt{darinka.dentcheva@stevens.edu}
        }
\date{}
\begin{document}
\maketitle

\begin{abstract}
    The paper provides a framework for the assessment and optimization of the total risk of complex distributed systems. The framework takes into account the risk of each agent, which may arise from heterogeneous sources, as well as the risk associated with the efficient operation of the system as a whole.  
    The challenges posed by this task are associated with the lack of additivity of risk, the need to evaluate the risk of every agent (unit) using confidential or proprietary information, and the requirement of fair risk allocation to agents (units). 
    We analyze systemic risk measures that are based on a sound axiomatic foundation  while at the same time facilitate risk-averse sequential decision-making by distributed numerical methods, which allow the agents to operate autonomously with minimal exchange of information. 
    
    We formulate a two-stage decision problem for a distributed system using systemic measures of risk and devise a decomposition method for solving the problem. The method is applied to a disaster management problem. We have paid attention to maintain fair risk allocation to all areas in the course of the relief operation. Our numerical results show the efficiency of the proposed methodology. 
\end{abstract}

\keywords{stochastic programming, systemic risk, high-dimensional risks, distributed risk-averse optimization, fairness in risk allocation}

\textbf{MSC} {90C15, 91G70, 90-08}

\section{Introduction}\label{intro}

    The main focus of our study is the question of risk evaluation and control for complex distributed systems. This type of system arises in many areas that involve multiple agents or units/subsystems that operate autonomously but also depend on the proper operation of the entire system. In this case, the assessment of total risk should consider the risk associated with each agent as well as the risk related to the integrity and efficient operation of the system as a whole. 
    While a large body of published research focuses on the properties of risk measures and their application in finance, less work addresses risk control in distributed energy systems, business systems, logistics problems, or robotic networks, where heterogeneous sources of risk may exist and complex relationships govern operation. 

    Further challenges arise when the risk evaluation for the subsystems is based on confidential or proprietary information. In some applications, it also becomes essential to distribute risk fairly among the agents or units. At some level, proper risk evaluation and budgeting for the entire system, along with fair risk allocation to its units, are essential when regulatory policies are designed.   
    The main objective of this paper is to address the evaluation and risk optimization of distributed complex systems that present these challenges. While building on the developments thus far, our goal is to advance a framework that is both theoretically sound and amenable to efficient numerical computations. 

    The evaluation of risk for a distributed complex system is non-trivial because the risk is not additive. This risk is termed ``systemic'' and the measures for it are called systemic risk measures. We also note that the risk of the system may include components characterizing the system as a whole and not purely aggregating the risk of the individual units or agents. For example, this may be the risk associated with completing a common task in cooperative systems.  
    A significant body of literature focuses on systemic risk measures designed for specific financial applications. In \cite{Brunnermeier-Cheridito}, the authors introduce a risk measure called SystRisk, which quantifies the risk as the additional amount of money needed to make the total externality of a financial network exceed some predefined threshold level. In \cite{covar}, the concept of CoVaR is introduced, which quantifies the contribution of financial institutions to systemic risk using conditional quantiles.

    Suppose the system of interest consists of $m$ units (agents).  
    Two major approaches to evaluating the total risk can be distinguished. One approach is to use an \textit{aggregation function}, $\Lambda:\Rb^m\to \Rb$, to aggregate the loss functions of the agents. Then the result is evaluated by a univariate risk measure. This type of analysis is presented in \cite{Chen-Iyengar} for finite probability spaces. In \cite{Kromer}, the authors propose aggregation functions, which are particularly suitable for financial systems. They also analyze convex risk measures defined on general probability spaces. In both studies, the aggregation function $\Lambda$ must satisfy properties similar to the axioms postulated for risk measures. One can also analyze the maximal risk over a class of aggregation functions rather than using one specific function. We refer to \cite{Rusch-risk-analysis-book} for an overview of the risk measures constructed this way.

    Another approach to risk evaluation of complex systems consists of evaluating agents' individual risks first and then aggregating the obtained values. This method is used, for example, in \cite{Biagini-v1} and in \cite{Feinstein-Rudloff}. In \cite{kiesel2010optimal}, convex risk functionals are defined for portfolios of risk vectors following this aggregation principle. A further extension in \cite{Biagini-v1} uses a set of admissible risk allocations for the units and suggests a risk measure that allows for scenario-dependent target allocations, where determining the total acceptable loss is done ahead of time. In \cite{Biagini-v2}, the authors analyze the existence and uniqueness of the optimal allocation resulting from those systemic risk measures and bring up the issue of fairness in risk allocation. They argue that the optimizer in the dual formulation provides a risk allocation that is fair from the point of view of the individual financial institutions. In this paper, we intend to present an alternative perspective on fairness in risk allocation. In \cite{Feinstein-Rudloff}, a set-valued counterpart of this approach is proposed by defining the systemic risk measure as the set of all vectors that make the outcome acceptable. Once the set of all acceptable allocations is constructed, one can derive a scalar-valued \textit{efficient allocation rule} by minimizing the weighted sum of components of the vectors in the set. 
    Set-valued risk measures were proposed in \cite{Jouini-Meddeb}, see also \cite{Ararat-Rudloff,Hamel-Heyde-dual} for duality theory, including the dual representation for specific set-valued risk measures. 
    An important feature of the aggregation function $\Lambda$ is to capture the interdependence between the system's components. In \cite{Pflug-copula-risk}, the authors propose to use the copula function for aggregation. The thesis made there is that independent operation does not carry systemic risk, as the risk of the individual components or agents can be optimized independently.

    Some work includes methods that use some multivariate counterpart of the univariate risk measures, most notably the notion of Multivariate Value-at-Risk, which is a counterpart to Value-at-Risk for random vectors
    and is identified with the set of \textit{$p$-efficient points}. The latter plays a role in stochastic programming with chance constraints (see, e.g. \cite{Dentcheva_Lai_Rusz,SP-textbook, prekopa2013stochastic}).
    Multivariate Value-at-Risk is used to define Average Value-at-Risk for multivariate distributions in various ways; see \cite{Lee-Prekopa,Noyan-Rudolf-cvar,Prekopa,Merakli,doldi2024shortfall}. Further extension pertains to the inclusion of quasiconvex risk measures, in particular, in the context of risk sharing and allocation \cite{mastrogiacomo2015pareto}.

    Axiomatic approaches to systemic risk are proposed in \cite{almen2024risk,Burgert-Rusch,Ruschendorf-dual,Chen-Iyengar,Ekeland-Law}. In our previous work \cite{almen2024risk}, we put forward a set of axioms for functionals defined on the space of random vectors. The point of view is that the risk factors of various sources and/or the loss functions of each unit are comprised in a random vector, which is in-line with the premise of \cite{Burgert-Rusch,Ruschendorf-dual,Chen-Iyengar,Ekeland-Law,Feinstein-Rudloff} among others. The set of axioms put forward in our earlier paper \cite{almen2024risk} are most closely related to the axioms proposed in \cite{Burgert-Rusch} and in \cite{Ekeland-Law}. A version of the translation property for random vectors was first introduced in \cite{Burgert-Rusch} formulated for convex risk measures that are defined on the space of bounded random vectors. This property requires translation to hold for all deterministic vectors, which is substantially different from the translation in \cite{almen2024risk,Ekeland-Law}. In \cite{Ekeland-Law}, the authors consider law-invariant risk measures for bounded random vectors to establish a Kusuoka representation for random vectors. In their analysis, the authors also require two normalization properties along with the axioms presented in \cite{almen2024risk}. The axioms in the latter paper appear to be the minimal set of assumptions needed to obtain a dual representation of the systemic risk measures. We shall provide relevant details in the following section.

    While most of the work on systemic risk is focused on analysis of a financial system or a set of portfolios for different branches of a firm, less work addresses other systems. Recently, several areas of research such as machine learning methods, robotics, and reinforcement learning have included measures of risk in their models. This interest is motivated by the robustness properties of the risk measures in the context of data-driven optimization. Optimization problems arising in statistical learning and robotics frequently require distributed optimization methods when addressing multi-agent systems. We mention a few references as it is impossible to be exhaustive; see \cite {AP-Selection-DD,vitt2019risk,zhu2025mean,Li2024distributed,wu2021impact,ma2017risk,lyu2023risk}.

    In this work we extend the analysis and numerical treatment of systemic risk for multi-agent systems. We propose a new risk-averse two-stage structured optimization problem that reflects the operation of a distributed system. We devise a numerical decomposition method. The system is disaggregated to allow for local control of each unit, minimizing its own risk, while cooperating in the process of minimizing the risk of the system as a whole. The method is implemented to solve a disaster management problem, which deals with the optimal allocation of resources in selected areas to minimize the damage caused by the disaster. The problem is similar to the two-stage problem in \cite{Noyan-disaster-problem}, however, it allows for local management decisions aiming at minimizing the risk locally, while the resource allocation comes from a global authority. This type of situation arises when federal agencies support local efforts to mitigate the effects of hurricanes, floods, fires, and other disasters.

    Our paper is organized as follows. 
    Section \ref{s:analysis} presents the analysis of the systemic risk measures, which evaluate the risk of individual components first and then aggregate the risk to determine the total risk of the entire system. We also present new results regarding the subdifferentiability of the systemic risk measures in question and their dual representation. 
    In Section \ref{s:problem}, we formulate a risk-averse two-stage stochastic programming problem and propose a distributed method for solving it. Section \ref{s:numerical_results} contains the numerical experiments with the disaster management problem.

\section{Risk measures for high-dimensional risks}
\label{s:analysis}

    First, recall  that the risk of a loss function can be evaluated using a univariate coherent measure of risk according to the widely accepted axiomatic framework that is proposed in \cite{Artzner} and further analyzed in \cite{Delbaen,Follmer,RS:2005,RuSh:2006a,PflRom:07,DDARriskbook}, and many other works. We refer to \cite{DDARriskbook} for an extensive treatment of risk measures and stochastic optimization with such measures. 

    Let $\Lc_p(\Omega, \Fc, P)$ denote the space of real-valued random variables defined on the probability space $(\Omega, \Fc, P)$ with finite $p$-th moments, $p\in[1,\infty)$, that are indistinguishable on events with zero probability. We shall assume that the random variables represent random costs, and we prefer small outcomes over large ones. The space  $\Lc_p(\Omega, \Fc, P)$ is equipped with the norm topology. It is paired with $\Lc_q(\Omega, \Fc, P)$, where $\frac{1}{p} + \frac{1}{q} = 1$. For any $Z \in \Lc_p(\Omega, \Fc, P) $ and $\xi \in \Lc_q(\Omega, \Fc, P)$, we use the bilinear form
    \[  
        \langle \xi, Z \rangle = \int_\Omega \xi(\omega) Z(\omega) dP(\omega). 
    \]
     The space  $\Lc_q(\Omega, \Fc, P)$ is equipped with a topology compatible with the pairing.
     Recall that a lower semi-continuous proper functional $\varrho: \Lc_p(\Omega, \Fc, P) \to \Rb\cup\{+\infty\}$ is a \textit{coherent risk measure} if it is convex, positively homogeneous, monotonic with respect to the a.s. comparison of random variables, and satisfies the following translation property:
    \[
        \varrho[Z + a] = \varrho[Z] + a \text{ for all } Z \in \Lc_p(\Omega, \Fc, P),\; a \in \Rb. 
    \]
    If $\varrho[\cdot]$ is monotonic, convex, and satisfies the translation property, it is called a \emph{convex} risk measure. 
    Every proper lower semicontinuous coherent risk measure $\varrho$ has a dual representation of the form
    \begin{equation}
        \varrho[Z] = \sup_{\xi \in \Ac_\varrho} \langle \xi, Z \rangle, \quad Z \in \Lc_p(\Omega, \Fc, P),
        \label{e:dual-rep-rho}
    \end{equation}
    where $\Ac_\varrho \subset \{ \xi \in \Lc_q(\Omega, \Fc, P) ~ | ~ \xi \geq 0 \text{ a.s.}, ~ \int_\Omega \xi(\omega)dP(\omega) = 1 \}$ is the convex-analysis subdifferential $\partial \varrho[0]$. 

    As our goal is to address high-dimensional sources of risks arising in complex distributed systems where agents or units operate semi-autonomously.  We focus on the space  $\Xf = \Lc_p(\Omega, \Fc, P; \Rb^m)$ of random vectors with finite $p$-th moments ($p\in[1,\infty]$) whose realizations are in $\Rb^m$. 

    The $m$-dimensional vector, whose components are all equal to one is denoted by $\onen$, and the random vector with realizations equal to $\onen$ is denoted by $\one$. The following definition is introduced and analyzed in \cite{almen2024risk}, see also \cite{DDARriskbook}: 
    \begin{definition}
    \label{d:riskonvectors}
        A lower semi-continuous functional $\varrho: \Xf \to \Rb\cup\{+\infty\}$ is a systemic coherent risk measure with preference to small outcomes iff it satisfies the following properties:
        \begin{description}
            \item[{\rm A1.}] {\rm Convexity:} For all $X, Y \in \Xf$ and for all $ \alpha \in (0,1)$, the following inequality holds\\
             \hspace*{4ex} $\varrho[\alpha X + (1-\alpha)Y] \leq \alpha \varrho[X] + (1 - \alpha)\varrho[Y] $.
            \item[{\rm A2.}] {\rm  Monotonicity:} For all $X, Y \in \Xf$, if $X_i \geq Y_i$ $P$-a.s. for all $i = 1, \dots, m$, then \hspace*{4ex} $\varrho[X] \geq \varrho[Y]$.
            \item[{\rm A3.}] {\rm  Translation property:} For all $X \in \Xf$ and $a \in \Rb$, $\varrho[X + a\one] = \varrho[X] + a\varrho[\one].$
            \item[{\rm A4.}] {\rm  Positive homogeneity:} For all $X \in \Xf$ and $t > 0$, we have $\varrho[tX] = t\varrho[X]$.
        \end{description}
    \end{definition}
    For $p<\infty$, it is shown in \cite{almen2024risk} that if the systemic risk measure $\varrho$ is proper, lower semicontinuous, and satisfies those axioms, then it has the following dual representation:
    \begin{equation}
        \varrho[X] = \sup_{\zeta \in \Ac_\varrho} \big\{ \langle \zeta, X \rangle_\Xf \big\}.
        \label{general dual}
    \end{equation}
    Here $\langle \zeta, X\rangle_\Xf$ denotes the dual pairing between $\Xf$  and $\Xf^*$ and the set $\Ac_\varrho\subset \Lc_q(\Omega, \Fc, P;\Rb^m)$ with $\frac{1}{p} + \frac{1}{q} =1$ is defined as:
    \begin{equation}
    \label{e:dualset-gen}
        \Ac_\varrho = \Big\{ \zeta \in \Lc_q(\Omega, \Fc, P;\Rb^m): \int_\Omega \zeta(\omega) dP(\omega) = \mu_\zeta, ~ \zeta \geq 0 \text{ a.s.},\;  \langle \one, \mu_\zeta \rangle =\varrho(\one) \Big\}.
    \end{equation}
    The proof of representation \ref{general dual} relies on the dual pairing of $\Xf$  and $\Xf^*$ so that
    all continuous linear functionals on $\Xf$ and $\Xf^*$ are given by the mappings $Z\mapsto \langle \zeta,Z \rangle $ (for a fixed $\zeta\in\Xf^*$) and $\zeta\mapsto \langle \zeta,Z \rangle $ (for a fixed $Z\in\Xf$) respectively. For $p\in (0,1),$ all continuous linear functionals on $\Zc^*$ (equipped with its norm topology) have this form. For $p=1$, we equip $Z^*$ with the weak$^*$ topology. 
    For $p = \infty$,  we can pair $\Zc = \Lc_\infty(\Omega, \Fc,P,\Rb^2)$ with $\Lc_1(\Omega, \Fc,P,\Rb^2)$ and equip the latter space with its norm topology and the former with its weak$^*$ topology. In that case, we also need the additional assumption that $\varrho$ is lower-semicontinuous with respect to its weak$^*$ topology, which is a very strong assumption. 
    \begin{definition}
    A systemic measure of risk $\varrho(\cdot)$ is normalized if $\varrho[\one] = 1$.
    \end{definition} 
    If the systemic measure $\varrho(\cdot)$ satisfies (A1)--(A4) and it is normalized, then $r =1$ for all $\zeta \in \Ac_\varrho$.
    This entails that for all $\zeta \in \Ac_\varrho$, $\zeta P$ can be interpreted as a probability measure on the space $\Omega \times \{ 1, 2 ,\dots, m \}$.  In the special case when $m=1$, we obtain the widely used dual representation of coherent measures of risk for scalar-valued random variables 
    \begin{equation}
        \varrho[X] = \sup_{\frac{dQ}{dP} \in \Ac_\varrho} \Eb_Q [ X ],
        \label{scalar dual}
    \end{equation}
    where $\frac{dQ}{dP}$ is the Radon-Nikodym derivative of the measure $Q$ with respect to the reference measure $P$.
    Hence, the dual representation shows that the risk measure takes into account the ``worse'' expectation of random vectors taken with respect to measures that augment the original probability measure $P$ and are absolutely continuous with respect to it.   
    Let $S^m_+$ denote the simplex of $m$-dimensional scalarization vectors:
    \[
        S^m_+ = \{ c\in \Rb^m : \sum_{i=1}^m c_i = 1, \; c_i \geq 0, \; i=1,\dots,m\}.
    \]
    Following the two principles of aggregation of risk, we have proposed two ways of constructing systemic measures of risk. 
    The first construction fixes a closed set $S\subset S^m_+$ and a lower semi-continuous univariate risk measure $\varrho:\Lc_p(\Omega,\Fc, P)\to\Rb\cup\{ +\infty\}$. The systemic risk measure $\varrho_S(\cdot)$ aggregates outcomes first and then evaluates the risk according to the following method. 
    \begin{equation}
        \label{rho_max_S}
        \varrho_S[X]=\varrho[M_S(X)],\quad\text{where } [M_S(X)] (\omega)= \max_{c\in S} c^\top X(\omega),\;\; \omega\in\Omega.
    \end{equation}
    It is straightforward to see that the systemic risk measure $\varrho_S(\cdot)$ is well-defined on $\Xf.$ We have shown that it is coherent (convex) according to Definition~\ref{d:riskonvectors} provided $\varrho(\cdot)$ is a coherent (convex) univariate measure of risk. This type of aggregation method is unsuitable when privacy issues are associated with the operation of the units or when proprietary information is involved. In that case, the method of evaluating risk for the agents or units first and then using aggregation is better suited to systemic risk management.
    Furthermore, in many situations, the issue of fairness in risk allocation among the agents or units becomes essential. This is why we proposed a new way of risk aggregation, which functions as follows. 
    Let $\Omega_m = \{1,\dots,m\}$ and $c\in S^m_+$. Then we view $c$ as a probability mass function on the space $\Omega_m$ and consider the probability space $(\Omega_m, \Fc_m, c)$, where $\Fc_m$ contains all subsets of $\Omega_m$.
    \begin{definition}
        Given a random vector $X\in\Xf$, a collection of $m$ proper risk measures $\varrho_i: \Lc_p(\Omega, \Fc, P)\to\Rb\cup\{+\infty\}$, $i=1,\dots,m,$ and a risk measure $\varrho_0 : \Lc_p(\Omega_m, \Fc_m, c) \to \Rb$, we define the systemic risk measure $\varrho_\sys : \Xf \to \Rb$ as follows. 
        \begin{equation}
            \varrho_\sys(X) = \varrho_0[R(X)],
            \label{def:syst-risk}
        \end{equation}
        where $R(X)\in\Lc_p(\Omega_m, \Fc_c, c)$ is a random variable with $m$ realizations given by 
        \[
        [R(X)](i) = \varrho_i(X_i)\quad i=1,\dots,m.
        \]
    \end{definition}
    Notice that, we could identify $\Lc_p(\Omega_m, \Fc_m, c)$ with $\Rb^m$ equipped with any of its norms. We stick to the notation $\Lc_p(\Omega_m, \Fc_m, c)$ for clarity and consistency; it also plays a role in our further analysis. 
    In a financial context, we could interpret this risk measure in the following way. Consider a firm with several branches of operation such as different portfolios on various markets, insurance contracts, etc. These portfolios have their risk evaluation while a risk aggregation for the total risk of the firm's operation is provided by  $\varrho_{\rm sys}(\cdot)$.  This way of aggregation of the portfolio risks provides a fair risk allocation to each portfolio/branch while providing robust evaluation of the total risk for the firm as we shall see later.  

    We adopt the usual definition of law-invariance.  If two random vectors $X$ and $Y$ have the same distribution, then a law-invariant risk measure $\varrho(\cdot)$ provides the same risk evaluation for both vectors: $\varrho(X)=\varrho(Y)$. The following properties hold for the measure $\varrho_\sys(\cdot)$, defined in \eqref{def:syst-risk}. 
    \begin{proposition}
        \mbox{\phantom{m}}
    \begin{tightlist}{iii}
    \item If the univariate measures of risk $\varrho_i(\cdot)$, $i=1,\dots, m,$ are convex and $\varrho_0(\cdot)$ is convex and monotonic, then $\varrho_\sys(\cdot)$ is convex;
    \item If $\varrho_i(\cdot)$, $i=0,1,\dots, m,$ are nondecreasing, then $\varrho_\sys(\cdot)$ is nondecreasing as well;
    \item
    If $\varrho_i(\cdot)$, $i=0,1,\dots,m$, satisfy the translation property, then $\varrho_\sys(\cdot)$ satisfies this property as well. 
    If additionally $\varrho_i(0)=0$, $i=0,1,\dots,m$, then $\varrho_\sys(\cdot)$ satisfies the normalization properties,  $\varrho_\sys(\one) = 1$,
    $\varrho_\sys(0) = 0.$
    \item
    If $\varrho_i(\cdot)$, $i=0,1,\dots,m$, are positively homogeneous, so is $\varrho_\sys(\cdot)$, implying $\varrho_\sys(0) = 0.$
    \item If $\varrho_i (\cdot)$, $i=1,\dots,m$ are law-invariant risk measures, then $\varrho_\sys(X)$ is law-invariant.
    \end{tightlist} 
    \end{proposition}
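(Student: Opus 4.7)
The plan is to push each property through the two-layer definition $\varrho_\sys(X) = \varrho_0[R(X)]$ with $[R(X)](i) = \varrho_i(X_i)$. The common pattern is: first apply the relevant axiom of each $\varrho_i$ coordinatewise to produce a pointwise inequality or identity between $R(X)$ and another element of $\Lc_p(\Omega_m,\Fc_m,c)$, and then apply the corresponding axiom of $\varrho_0$ to the outer layer. Monotonicity of $\varrho_0$ is what allows us to convert pointwise inequalities on $\Omega_m$ into inequalities for $\varrho_\sys$, which is why it is needed together with convexity of $\varrho_0$ in part~(i).

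For (i), I would fix $X,Y\in\Xf$ and $\alpha\in(0,1)$, observe that convexity of each $\varrho_i$ yields $[R(\alpha X+(1-\alpha)Y)](i)\le \alpha[R(X)](i)+(1-\alpha)[R(Y)](i)$ for all $i$, hence $R(\alpha X+(1-\alpha)Y)\le \alpha R(X)+(1-\alpha)R(Y)$ in $\Lc_p(\Omega_m,\Fc_m,c)$, and then combine monotonicity and convexity of $\varrho_0$. For (ii), the hypothesis $X_i\ge Y_i$ a.s.\ gives $\varrho_i(X_i)\ge \varrho_i(Y_i)$ coordinatewise by monotonicity of $\varrho_i$, so $R(X)\ge R(Y)$ pointwise on $\Omega_m$, and monotonicity of $\varrho_0$ closes the argument. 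For (iv), positive homogeneity of each $\varrho_i$ gives $R(tX)=tR(X)$, and positive homogeneity of $\varrho_0$ yields $\varrho_\sys(tX)=t\varrho_\sys(X)$; setting $t\downarrow 0$ (or taking $t=0$ directly if $0$ is in the domain) produces $\varrho_\sys(0)=0$.

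For (iii), the translation property of each $\varrho_i$ implies $[R(X+a\one)](i)=\varrho_i(X_i)+a$, so $R(X+a\one)=R(X)+a\mathbf{1}_m$ as elements of $\Lc_p(\Omega_m,\Fc_m,c)$, where $\mathbf{1}_m$ is the constant function equal to one on $\Omega_m$; then the translation property of $\varrho_0$ gives $\varrho_\sys(X+a\one)=\varrho_\sys(X)+a$, which is the form required since one checks $\varrho_\sys(\one)=1$ under the additional assumption $\varrho_i(0)=0$ for all $i=0,1,\ldots,m$. Indeed, under that assumption $R(0)=0$ in $\Lc_p(\Omega_m,\Fc_m,c)$, so $\varrho_\sys(0)=\varrho_0(0)=0$, and then $\varrho_\sys(\one)=\varrho_\sys(0)+1=1$; this is also consistent with part~(iv) giving $\varrho_\sys(0)=0$.

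For (v), I would use that the distribution of $X$ determines the marginal distribution of each component $X_i$, so if $X$ and $Y$ are equal in law then $X_i\stackrel{d}{=}Y_i$ for every $i$; law-invariance of $\varrho_i$ then yields $[R(X)](i)=[R(Y)](i)$ for all $i$, so $R(X)$ and $R(Y)$ coincide as elements of $\Lc_p(\Omega_m,\Fc_m,c)$ and hence $\varrho_\sys(X)=\varrho_\sys(Y)$; note that no law-invariance hypothesis on $\varrho_0$ is required. None of the parts presents a genuine obstacle; the only point needing care is to keep straight that translation and positive homogeneity for $\varrho_0$ are univariate axioms on $\Lc_p(\Omega_m,\Fc_m,c)$, and that the constant-in-$i$ element $a\mathbf{1}_m$ is the correct translate corresponding to $a\one\in\Xf$ after applying $R$.
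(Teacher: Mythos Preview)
Your proposal is correct and follows essentially the same approach as the paper's proof: in each part you push the relevant axiom through the inner layer $R(X)$ coordinatewise and then apply the corresponding axiom of $\varrho_0$, using monotonicity of $\varrho_0$ to convert the pointwise inequality in part~(i). Your treatment of the normalization in~(iii) is in fact slightly more explicit than the paper's, and your remark that no law-invariance assumption on $\varrho_0$ is needed in~(v) is also correct.
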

    \begin{proof}
        (i) Given any $X,Y \in \Xf$ and $\alpha \in (0,1)$, we consider the random vector $W=\alpha X + (1-\alpha)Y$.  We have $\varrho_i(W_i) \leq \alpha\varrho_i(X_i) + (1-\alpha)\varrho_i(Y_i)$, $i = 1, \dots, m$. Defining a random variable $W^\prime$ on $\Omega_m$ by setting $W^\prime(i) = \alpha\varrho_i(X_i) + (1-\alpha)\varrho_i(Y_i)$, we obtain that $W\leq W^\prime$. Using the monotonicity and convexity of $\varrho_0$, we obtain
        \[
        \varrho_0(W)\leq \varrho_0(W^\prime) \leq \alpha\varrho_0[R(X)] + (1-\alpha)\varrho_0[R(Y)].
        \]
        Hence, $\varrho_\sys\big(\alpha X + (1-\alpha)Y\big) \leq \alpha \varrho_\sys(X) + (1-\alpha)\varrho_\sys(Y)$.
    
    (ii)  Suppose the vectors $X,Y \in \Xf$ satisfy $X_i \leq Y_i$, $i=1,\dots,m$, $P$-a.s.. Hence, $\varrho_i(X_i) \leq \varrho_i(Y_i)$ for all $i = 1, \dots, m$, by the monotonicity property of $\varrho_i$. This further implies that $R(X)\leq R(Y)$, entailing that  $\varrho_0[R(X)]\leq \varrho_0[R(Y)]$ due to the monotonicity of $\varrho_0(\cdot)$.

    (iii) Given a random vector $Z \in \Xf$ and a constant $a$, we have
        $[R({Z+a\one})](i) = \varrho_i(Z_i+a)= \varrho_i(Z_i)+a$. Hence, 
        $\varrho_0\big[R({Z+a\one})\big]= \varrho_0[R(Z)] + a.$

    (iv) It is sufficient to verify the positive homogeneity. Given a random vector $Z \in \Xf$ and $t > 0$, we have
    \[
        \varrho_\sys(tZ) = \varrho_0[R(tZ)] = \varrho_0\big(t R(Z)\big)=t\varrho_0[R(Z)]= t \varrho_\sys (Z),
    \]
    where we have used the positive homogeneity property of $\varrho_i(\cdot)$ for all $i=0,1, \dots, m$. 

    (v) If $X$ and $Y$ have the same distribution, it follows that their components $X_i$ and $Y_i$, $i=1,\dots,m$ have the same marginal distributions. Since risk measures $\varrho_i$ are law-invariant, we infer that $R(X)$ and $R(Y)$ have the same realizations, which entails that $\varrho_\sys[X] = \varrho_0[R(X)]= \varrho_0[R(Y)]=\varrho_\sys[Y].$ 
    \end{proof}

    We present two examples that show that the proposed non-linear aggregation of local risk implicitly enforces fairness of risk allocation to the units of the system.  
    \begin{example}
    {\rm 
    Consider the case when $\varrho_0$ is a convex combination of the expected value and the Average Value-at-Risk at some level $\alpha$ and all components of $Z$ are evaluated by the same measure of risk $\varrho(\cdot)$. Then for any $\kappa \in [0,1]$ and $c\in S_+^m$, the systemic measure of risk takes on the form:
    \begin{align*}
        \varrho_\sys^1 [X] & = \varrho_0[R(X)] = (1-\kappa) \Eb [R(X)] + \kappa \avar_\alpha [R(X)]\\
         & = (1-\kappa) \sum_{i=1}^m c_i \varrho[X_i] + \kappa \inf_{\eta \in \Rb} \Big\{  \eta + \frac{1}{\alpha} \sum_{i=1}^m c_i(\varrho[X_i] - \eta)_+ \Big\}.
    \end{align*}
    Here, the infimum with respect to $\eta \in \Rb$ is taken over the individual risks of the components $\varrho[X_i]$, $i = 1, \dots, m$. Hence, this method of aggregation imposes an additional penalty for the components whose risk exceeds some threshold. \hfill $\Box$
    }
    \end{example}
    \begin{example}
    {\rm
    Let $\varrho_0$ be the mean-upper-semideviation risk measure of the first order, and all components of $X$ be evaluated by the same measure of risk $\varrho[\cdot]$. We define the measure of systemic risk as follows:
    \begin{align*}
        \varrho_\sys^2[X] & = \varrho_0[R(X)] = \sum_{i=1}^m c_i \varrho[X_i] + \kappa \sum_{i=1}^m c_i \Big(\varrho[X_i] - \sum_{j=1}^m c_j \varrho[X_j] \Big)_+.
    \end{align*}
    Evidently, this systemic risk measure is an aggregation of the individual risk of the components, in which the risk of every component is compared to the weighted average risk of all components, and the deviation of the individual risk from that average is penalized.
    Hence, this method of non-linear aggregation maintains fairness within the system and keeps the components functioning at a similar level of risk. \hfill $\Box$
    }
    \end{example}

    Now, we shall establish the form of the subdifferential for the systemic risk measures, defined in \eqref{def:syst-risk}. This will also provide us with the additional description of the dual set $\Ac_\varrho= \partial \varrho[0] $. 

    For $V\in \Lc_p(\Omega_m, \Fc_m, c)$ and $\nu\in \Lc_q(\Omega_m, \Fc_m, c)$ with $\frac{1}{p}+ \frac{1}{q} =1 $, we use the notations $\langle \nu, V\rangle_c$ for the dual pairing between the two spaces, noticing that   
    \[
        \langle \nu, V\rangle_c = \sum_{i=1}^m c_i\nu_iV_i. 
    \]
    \begin{proposition}
    If $\varrho_i: \Lc_p(\Omega, \Fc, P)\to\Rb$, $i=1\dots m,$, and $\varrho_0: \Lc_p(\Omega_m, \Fc_m, c) \to \Rb$ are convex measures of risk for scalar-valued random variables, then $\varrho_\sys[\cdot]$ defined in \eqref{def:syst-risk} is subdifferentiable for any $X\in\Xf$ and its subdifferential has the form
    \[
        \partial \varrho_\sys(X) = \Big\{ 
        \zeta \in \Xf^* : \zeta_i = \nu_i \xi_i : ~ \nu \in \partial \varrho_0[R(X)], ~ \xi_i \in \partial\varrho_i (X_i)\text{ for all } i\in\Omega_m 
        \Big\}.
    \]
    The dual representation of $\varrho_\sys[\cdot]$ is given by \eqref{general dual} with the dual set
    \begin{equation*}
        \Ac_\sys = \Big\{ 
        \zeta \in \Xf^* : \zeta_i = \nu_i \xi_i \text{ a.s.}, ~ \nu \in \Ac_0, ~ \xi_i \in \Ac_i \text{ for all } i\in\Omega_m 
        \Big\},
    \end{equation*} 
    where $\Ac_i$ represents the dual set of the measure $\varrho_i(\cdot)$, $i=0, 1,\dots, m.$ 
    \end{proposition}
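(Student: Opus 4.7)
The plan is to view $\varrho_\sys$ as the composition $\varrho_0 \circ R$, where $R:\Xf \to \Lc_p(\Omega_m,\Fc_m,c)$ is the componentwise map $[R(X)](i)=\varrho_i(X_i)$, and then apply a chain rule for convex subdifferentials. Convexity of the composition follows because each $R_i$ depends only on $X_i$ and is convex, while $\varrho_0$ is convex and monotonic; this is essentially the argument already used in part (i) of the previous proposition. Existence of the subdifferential and applicability of the chain rule are ensured by the finiteness of $\varrho_0$ on the finite-dimensional space $\Lc_p(\Omega_m,\Fc_m,c)\cong\Rb^m$, which gives continuity of $\varrho_0$ everywhere, together with the real-valuedness assumed for each $\varrho_i$ on $\Lc_p(\Omega,\Fc,P)$.

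For the forward inclusion, given $\nu \in \partial\varrho_0[R(X)]$ and $\xi_i\in\partial\varrho_i(X_i)$, I would form the candidate $\zeta$ prescribed in the statement and verify the subgradient inequality at $X$ directly. Writing out
\[
\varrho_0[R(Y)] \geq \varrho_0[R(X)] + \langle \nu, R(Y)-R(X)\rangle_c
\]
and combining it with $\varrho_i(Y_i)-\varrho_i(X_i)\geq \langle \xi_i, Y_i-X_i\rangle$, the monotonicity of $\varrho_0$ yields $\nu\geq 0$ componentwise, so the pointwise inequalities can be multiplied by $\nu_i$ and summed to produce $\varrho_\sys(Y)\geq \varrho_\sys(X)+\langle \zeta, Y-X\rangle_{\Xf}$.

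For the reverse inclusion, I would argue via Fenchel conjugation. Since both $\varrho_0$ and the $\varrho_i$ are proper, lsc, and continuous at the relevant points, one obtains
\[
\varrho_\sys^{*}(\zeta) = \inf\Bigl\{\varrho_0^{*}(\nu)+\sum_i \nu_i\,\varrho_i^{*}(\xi_i)\ :\ \zeta_i=\nu_i\xi_i,\ \nu\geq 0\Bigr\},
\]
and the infimum is attained because the inner objects are real-valued convex functions whose sublevel sets are suitably compact. Then the identity $\zeta\in\partial\varrho_\sys(X)\iff \varrho_\sys(X)+\varrho_\sys^{*}(\zeta)=\langle \zeta,X\rangle_\Xf$ forces the optimal $(\nu,\xi_i)$ to satisfy $\nu\in\partial\varrho_0[R(X)]$ and $\xi_i\in\partial\varrho_i(X_i)$, yielding the desired representation of $\zeta$. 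The dual-set formula then follows immediately from positive homogeneity (axiom A4), which gives $\Ac_\sys=\partial\varrho_\sys(0)$ and reduces to the stated description because $\partial\varrho_i(0)=\Ac_i$ for each $i$ and $\partial\varrho_0(R(0))\supseteq\Ac_0$ with $R(0)=0$.

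The main obstacle is the reverse inclusion: establishing the chain rule in a form that gives the product structure $\zeta_i=\nu_i\xi_i$ requires both the continuity/qualification conditions above and a careful bookkeeping of sign constraints coming from monotonicity. The componentwise separability of $R$ (no coupling between $X_i$ and $X_j$) is what makes the multiplicative form available; without it, one would only obtain a sum-of-products expression. I expect this separability, together with the closedness of the dual sets $\Ac_0$ and $\Ac_i$, to deliver the attainment in the conjugate formula and hence the explicit parametrization claimed in the proposition.
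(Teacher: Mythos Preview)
Your plan is correct and follows essentially the same high-level route as the paper: both view $\varrho_\sys$ as the composition $\varrho_0\circ R$ with $R$ the componentwise map, verify that $R$ is convex and continuous (real-valued convex risk measures on $\Lc_p$ are continuous, and $\varrho_0$ is finite on the finite-dimensional space $\Lc_p(\Omega_m,\Fc_m,c)$), and then invoke a convex chain rule.

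The difference is in how the chain rule is obtained. The paper simply cites a textbook result (Theorem~3.7 in \cite{DDARriskbook}) for the subdifferential of a composition with a monotone convex outer function, and then only writes out the forward-inclusion calculation explicitly; the reverse inclusion is delegated entirely to that reference. You, by contrast, give a self-contained two-sided argument: the forward inclusion by the direct computation using $\nu\ge 0$ (which the paper also does), and the reverse inclusion via the Fenchel conjugate formula for compositions and the characterization $\zeta\in\partial\varrho_\sys(X)\iff \varrho_\sys(X)+\varrho_\sys^*(\zeta)=\langle\zeta,X\rangle_\Xf$. Your approach is more transparent and does not require the reader to consult an external chain-rule theorem; the paper's approach is shorter but less self-contained. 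One point to tighten in your write-up: the attainment of the infimum in your conjugate formula needs weak$^*$ compactness of the relevant sublevel sets for the $\xi_i$ (since $\Lc_q(\Omega,\Fc,P)$ is infinite-dimensional), not just real-valuedness; this follows from the translation property, which forces $\int\xi_i\,dP=1$ and $\xi_i\ge 0$, so the feasible $\xi_i$ live in a weak$^*$-compact set.
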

    \begin{proof}
    We consider the operator $F:\Xf\to \Lc_p(\Omega_m, \Fc, c)$, defined by $[F(X)](i)= \varrho_i(X_i)$ for $i\in \Omega_m.$ This operator is convex in the sense that for all $i\in\Omega_m$, we have 
    \begin{multline*}
    [F(\alpha X + (1-\alpha)Y)](i) = \varrho_i(\alpha X_i + (1-\alpha)Y_i)\leq \alpha \varrho_i(X_i) + (1-\alpha)\varrho_i(Y_i)\\  =
    \alpha F(X)](i) + (1-\alpha)F(X)](i).
    \end{multline*} 
    The operator is also norm-to-norm continuous at any $X\in\Xf$ due to the continuity of the convex measures of risk, where ``norm-to-norm'' means that we use the topology induced by the norms of $\Xf$ and $\Lc_p(\Omega_m, \Fc, c)$. To apply Theorem 3.7 in \cite{DDARriskbook}, we recall that a subgradient of $F(\cdot)$ at $X$ is a continuous linear operator $S:\Xf\to \Rb$ such that for all $Y\in \Xf$ we have $SY \le F(X+Y) - F(X)$. This is equivalent to $S\in \partial F(x)$ if and only if for all $Y\in \Xf$ we have $SY \le F'(X;Y).$
    Therefore, for any $X\in\Xf$ and $Y\in\Xf$, using the subdifferentiability properties of the convex measures of risk (\cite[Theorem 2.32]{DDARriskbook}), we obtain for all $\xi_i\in \partial \varrho (X_i)$, the following relations hold 
    \begin{gather*}
    \varrho_i(X_i) = \langle \xi_i, X_i\rangle= \int_\Omega \xi_i(\omega) X_i (\omega) P(d\omega)\quad \text{and} \\
    \varrho_i(X_i+Y_i) - \varrho_i(X_i)\geq \langle \xi_i, Y_i\rangle = \int_\Omega \xi_i(\omega) Y_i (\omega) P(d\omega).
    \end{gather*} 
    Additionally, since $\varrho_0[\cdot]$ is a convex risk measure imposed on a random variable with finitely many realizations, we infer
    \begin{gather*}
        \varrho_0[R(X)] = \sup_{\nu\in\Ac_0} \sum_{i=1}^m c_i \nu(i) R(X)(i)\quad\text{and}\\
        \partial\varrho_0[R(X)] = \big\{\nu\in\Ac_0: \langle \nu, R(X)\rangle_c = \varrho_0[R(X)] \big\}
    \end{gather*} 
    Hence, using $(\xi_1,\dots,\xi_m)$ with $\xi_i\in \partial \varrho (X_i)$ for all $i=1,\dots, m$ and $\nu\in\partial \varrho_0[R(X)] $, we obtain 
    \begin{multline}
        \varrho_0[R(X)] = \sum_{i\in\Omega_m} c_i \nu(i) R(X)(i) = \sum_{i\in\Omega_m} c_i \nu(i) \int_\Omega \xi_i(\omega) X_i (\omega) P(d\omega) \\
        = \int_\Omega  \sum_{i\in\Omega_m} c_i \nu(i) \xi_i(\omega) X_i (\omega) P(d\omega)
        = \int_\Omega  \langle \zeta(\omega) X (\omega)\rangle_c P(d\omega)
        \label{e:subdiff-rho-sys}
    \end{multline}
    For the particular case $\partial \varrho_\sys(0)$, we get
    \[
    \Ac_\sys = \partial \varrho_\sys(0) = \Big\{ 
        \zeta \in \Xf^* : \zeta_i = \nu_i \xi_i \text{ a.s.}, ~ \nu \in \Ac_0, ~ \xi_i \in \Ac_i \text{ for all } i\in\Omega_m 
        \Big\}.
    \]
    \end{proof}
   If we assume that $\varrho_i: \Lc_p(\Omega, \Fc, P)\to\Rb$, $i=1\dots m,$, and $\varrho_0: \Lc_p(\Omega_m, \Fc_m, c) \to \Rb$ are coherent measures of risk, then we notice that $\nu_i \geq 0$ and $\xi_i \geq 0$ a.s., which implies that $\zeta_i \geq 0$ a.s. as well. The relation
    \[
        \int_\Omega \zeta_i(\omega) P(d\omega) = \int_\Omega \nu_i \xi_i(\omega) P(d\omega) = \nu_i \int_\Omega \xi_i(\omega)P(d\omega) = \nu_i
    \]
    entails that 
    \[ 
        \langle \one, \mu_\zeta \rangle = \int_\Omega \langle \onen, \mu_\zeta \rangle P(d\omega) = \int_\Omega \sum_{i\in\Omega_m} \nu_i P(d\omega) = 1,
    \]
    as stated in \eqref{e:dualset-gen}, taking into account the normalization property. 

    We point out that the dual representation of the systemic risk measures shows that they provide a robust evaluation of potential losses when the underlying probability distributions are approximated or subjected to perturbations. 

    When $\varrho_\sys$ is a law-invariant risk measure, we can establish its consistency with stochastic dominance orders. Recall that an integrable random variable $V$ is stochastically smaller than an integrable random variable $Y$ with respect to the \textit{increasing convex order}, denoted $V \preceq_{\icx} Z$, if 
    \[ 
        \Eb[(V - \eta)_+] \leq \Eb[(Z-\eta)_+] \quad \text{for all } \eta \in \Rb. 
    \]
    We use the linear stochastic order for random vectors that is based on scalarization vectors from the simplex $S^m_+$  and the increasing convex order; it is defined as follows.
    \begin{definition}
        A random vector $X \in \Xf$ is linearly stochastically smaller than a random vector $Y \in \Xf$, denoted $X \preceq^{\lin}_{\icx} Y$, if
        \[
            c^\top X \preceq_{\icx} c^\top Y \quad \text{for all } c \in S^m_+.
        \]
    \end{definition}
    \begin{proposition}
    Assume that the space $(\Omega,\Fc,P)$ is either standard atomless or finite with equal probabilities of all simple events.
        If the functionals $\varrho_i(\Omega,\Fc,P)\to\Rb$, $i=1,\dots, m$ are law invariant coherent risk measures and $\varrho_0(\Omega_m,\Fc_c,c)\to\Rb$ satisfies the monotonicity property (A2), then the risk measure $\varrho_\sys(\cdot)$ defined in \eqref{def:syst-risk} is consistent with the linear increasing convex order. i.e., 
        \[  
            X \preceq^{\lin}_{\icx} Y \enskip  \enskip \Rightarrow \enskip \varrho_\sys[X] \leq \varrho_\sys[Y]. 
        \]
    \end{proposition}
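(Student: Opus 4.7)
The plan is to reduce the proof to a componentwise application of a classical consistency result for law-invariant coherent risk measures, followed by one use of the monotonicity of $\varrho_0$. The linear increasing convex order between $X$ and $Y$ is much stronger than what we actually need: it suffices to extract the marginal comparisons.

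First I would choose the scalarization vectors $c = e_i$ (the $i$-th standard unit vector), which lie in $S^m_+$. The hypothesis $X \preceq^{\lin}_{\icx} Y$ then gives $X_i = e_i^\top X \preceq_{\icx} e_i^\top Y = Y_i$ for each $i = 1, \dots, m$. Next I would invoke the known result that on a standard atomless probability space, or on a finite probability space in which all elementary events have equal probability, every law-invariant coherent risk measure is consistent with the increasing convex order on integrable random variables (this follows from the Kusuoka representation, see, e.g., \cite{DDARriskbook}). Applying this to $\varrho_i$ yields $\varrho_i(X_i) \leq \varrho_i(Y_i)$ for every $i$.

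By the definition of $R$, this means $[R(X)](i) \leq [R(Y)](i)$ for every $i \in \Omega_m$, so $R(X) \leq R(Y)$ pointwise on the finite space $(\Omega_m, \Fc_m, c)$. The monotonicity axiom (A2) satisfied by $\varrho_0$ then gives
\[
\varrho_\sys[X] = \varrho_0[R(X)] \leq \varrho_0[R(Y)] = \varrho_\sys[Y],
\]
which is the desired conclusion.

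The only nontrivial ingredient is the consistency of law-invariant coherent risk measures with $\preceq_{\icx}$, which is precisely where the structural assumption on $(\Omega,\Fc,P)$ enters (atomless or finite uniform), and it is the main obstacle only in the sense of needing a careful citation; everything else is routine. Note that full linear stochastic dominance is not exploited in the argument—only the marginal orderings are—so the statement may actually hold under the weaker assumption $X_i \preceq_{\icx} Y_i$ componentwise, a point worth remarking on after the proof.
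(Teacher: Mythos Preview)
Your proof is correct and follows essentially the same approach as the paper: extract the componentwise relations $X_i \preceq_{\icx} Y_i$ from the linear order (via the unit vectors $e_i\in S^m_+$), apply the consistency of law-invariant coherent risk measures with $\preceq_{\icx}$ (the paper cites \cite[Theorem 5.22]{DDARriskbook}), and conclude using the monotonicity of $\varrho_0$. Your closing observation that only the marginal orderings are actually used is accurate and worth noting.
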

    \begin{proof}
        Suppose $X \preceq^{\lin}_{\icx} Y$ holds. Observe that the linear order $X \preceq^{\lin}_{\icx} Y$ implies the coordinate-wise comparison $X_i \preceq_{\icx} Y_i$ for all $i=1,\dots.m$.
        Since any law invariant coherent risk measure $\varrho_i$ is consistent with the increasing convex order (\cite[Theorem 5.22]{DDARriskbook}), it follows that $\varrho_i[X_i] \leq \varrho_i[Y_i]$ for all $i=1,\dots,m$. This implies that $\varrho_0[R(X)] \leq \varrho_0[Y_R]$ by the monotonicity of $\varrho_0(\cdot)$. 
    \end{proof}

\section{Two-stage stochastic programming problem}
\label{s:problem}

    In this section, we formulate a two-stage stochastic programming problem for a system of agents where the risk of the system is evaluated using the risk measure proposed in Section \ref{s:analysis}. 
    Suppose the system consists of a set $\Jc=\{1,\dots,m\}$ of agents that can operate independently and communicate with their neighbors. The first-stage decisions, denoted as $x_i\in\Rb^{n_1}, ~ i\in\Jc$, are deterministic and made \textit{here and now}. Let $\Xc_i \subset \Rb^{n_1}$ be closed convex sets representing the constraints imposed on $x_i, \enskip i\in\Jc$ and $f_i:\Rb^{n_1}\to\Rb, \enskip i\in\Jc$ be convex continuous functions representing the cost associated with each $x_i$. 
    Additionally, the first-stage decisions of the agents are coupled through linear constraints using matrices $A_i \in \Rb^{d_1 \times n_1}, \enskip i\in\Jc$ as follows:
    \begin{equation*}
        \sum_{i\in\Ic} A_i x_i =b.
    \end{equation*}
    Without loss of generality, we may assume that $b=0$. If this is not the case, we may expand $x_i$ by an additional component, which is then sent to 1 within the sets $\Xc_i$; the matrices $A_i$ are extended by an additional column equal to $-c_ib.$
    
    The second-stage problem involves uncertainty modeled by random data $\xi$ with $N$ realizations, denoted as $\xi^s$, where $s\in\Sc=\{1,\dots,N\}$. Once the random data $\xi^s$ is observed, each agent $i\in\Jc$ makes a decision $y^s_i \in \Rb^{n_2}$ and the cost associated with it is given by a continuous function $g^s_i: \Rb^{n_2} \to \Rb$, which we assume to be linear. We model it as $g^s_i(y^s_i) = \langle q^s_i, y^s_i \rangle$ for some cost vector $q^s_i \in \Rb^{n_2}$. We assume that decision variables $y^s_i, \enskip i\in\Jc$ of the agents depend on their first-stage decisions $x_i$ and on common systemic variables $z^s$. This dependence is given by linear constraints:
    \begin{equation} 
        T^s_i x_i + \sum_{j\in\Jc} W^s_j y^s_j + B^s z^s = h_i^s, \quad i\in\Jc, s\in\Sc,
        \label{c:dynamics}
    \end{equation}
    where $T_i \in \Rb^{d_2\times n_1}$, $W^s_i \in \Rb^{d_2\times n_2}$, and $B^s \in {d_2\times d_3}$ for all $i\in\Jc, s\in\Sc$. The local constraints for $y^s_i$ are given by closed convex sets $\Yc^s_i \subset \Rb^{n_2}$ and the systemic variables are restricted by 
    $z^s\in Y^s\subseteq\Rb^{d_3}$. A cost function  $z^s\mapsto (u^s)^\top z^s$ is associated with the systemic variables $z^s\in Y^s\subseteq\Rb^{d_3}$ for each scenario $s\in\Sc$. 

    The vectors $h_i^s$ in the constraints can be included in \eqref{c:dynamics} by extending the decision variables $y^s_i$ by additional components equal to $1$ and choosing appropriate coefficients in $W^s_i$. Hence, we can assume again that $h_i^s=0$ without loss of generality. 

    Let $\varrho_\sys$ be a systemic risk measure defined as in \eqref{def:syst-risk}. The systemic risk reflects the total risk of individual agents, which also incorporates the risk associated with the purely systemic features gathered in the random vector $Z$ with realizations $z^s$. For all $i\in\Jc$, we introduce decision variables $r_i^s\in\Rb_+$ to determine the allocation of the systemic cost in scenario $s\in\Sc$ to each agent. We further define the random vector $R$ whose components $R_i$, $i\in\Jc$, have realizations $r_i^s$, $s\in\Sc$, a random variable $Q_i$ with realizations $ q^s_i$, $s\in\Sc$, and a random function $G(\cdot)$ that has $m$ realizations defined as $G_i(x,Y_i,Z,R_i) = \varrho_i[ \langle Q_i, Y_i \rangle + R_i ]$ for $i\in\Jc$. Since only finitely many realizations of the random elements exist, we can think of $G$ as a vector function with $m$ components. Due to the convexity and monotonicity of the measures $\varrho_\sys(\cdot)$ and $\varrho_i(\cdot)$, we can consider them as convex functions of the respective decision variables defined on a finite-dimensional space.  
    We formulate the following stochastic optimization problem:
    \begin{equation}
    \label{p:two-stage-pre}
        \begin{aligned}
            \min ~&~ \sum_{i\in\Jc} f_i(x_i) + \varrho_\sys[G], \\
            \text{s.t.} ~&~ \sum_{i\in\Jc} A_i x_i = 0, \\
            ~&~ T^s_i x_i + \sum_{j\in\Jc} W^s_j y^s_j + B^s z^s = 0, \quad i\in\Jc, s\in\Sc, \\
            ~&~ (u^s)^\top z^s = \sum_{i\in\Jc} r_i^s,\\
            ~&~ x_i \in \Xc_i, \;\; y^s_i \in \Yc^s_i, \;\; z^s\in Y^s,\;\; r_i^s \geq 0, \quad  i\in\Jc, s\in\Sc. 
        \end{aligned}
    \end{equation}
    Known approaches based on scenario decomposition and the multicut method are not applicable for this purpose because problem \eqref{p:two-stage-pre} contains coupling constraints linking all agents' decision variables, as well as constraints linking the decisions in all scenarios. Hence, the problem requires a separate dedicated analysis.  We consider that it is essential to solve problem \eqref{p:two-stage-pre} in a distributed way such that every agent can make its own decision and exchange a limited amount of information with the rest of the system to achieve optimality. 

    In the first step, we shall use the dual representation of the systemic risk measure 
    \begin{align*}
        \varrho_\sys[Q] & = \sup_{\substack{\nu\in\Ac_0\\ \xi_i\in\Ac_i,~i\in\Jc}} \sum_{s\in\Sc} \sum_{i\in\Jc} c_i\nu_i p_s \xi^s_i \big(\langle q^s_i, y^s_i \rangle + r_i^s\big) \\
        & = \sup_{\nu\in\Ac_0} \sum_{i\in\Jc} c_i\nu_i \sup_{\xi_i\in\Ac_i} \sum_{s\in\Sc} p_s \xi^s_i \big(\langle q^s_i, y^s_i \rangle + r_i^s\big). 
    \end{align*}
    Introducing auxiliary variables $\eta\in\Rb$, $\theta_i\in\Rb, ~i\in\Jc$, we change the objective of \eqref{p:two-stage-pre} to $ \sum_{i\in\Jc} f_i(x_i) +\eta$ and add the following constraints:
    \begin{gather}
        \eta \geq \sum_{i\in\Jc} c_i\nu_i \theta_i, \quad \nu \in \Ac_0, \label{c:sys-dual-eta} \\
        \theta_i \geq \sum_{s\in\Sc} p^s \xi^s_i (q^s_i)^\top y^s_i, \quad \xi_i \in\Ac_i, ~ i\in\Jc.
    \label{c:ind-dual-constr}
    \end{gather} 
    Further, we can approximate the set $\Ac_0$ by finitely many measures $\nu\in \Ac_0$ in the course of the numerical method using cutting planes of the following  form
    \begin{gather*}
        \eta \geq \sum_{i\in\Jc} c_i\nu_i^\ell \varrho_i \big(\langle q_i, Y_i \rangle + R_i\big),\quad \ell =1,2,\dots
    \end{gather*} 
    For brevity, let us assume that we deal with a finite approximation $\Bc$ of the set ${\Ac}_0$. Introducing auxiliary variables $w^\ell, ~ \ell\in \Bc$, we rewrite \eqref{c:sys-dual-eta} as follows:
    \begin{align}
        \sum_{i\in\Jc} c_i \nu^\ell_i \theta_i + w^\ell - \eta = 0, \quad \ell\in \Bc.
    \label{c:sys-dual-constr-eq}
    \end{align}
    To distribute \eqref{c:sys-dual-constr-eq}, we create copies of $\eta$, $w^\ell, ~\ell\in \Bc$ for every agent $i\in\Jc$, denoted $\eta_i$ and $w^\ell_i$, and set them equal between agents. We stack $\eta_i$ and $w^\ell_i, ~ \ell \in \Bc$ into a vector $v_i\in\Rb^{1+|\Bc|}$ for all $i\in\Jc$. Assuming that the system is a connected network, we create a spanning tree $\Ab$  of the network and set $v_i$ to be equal along the arcs $\Ab$ as follows:
    \begin{align*}
        v_i = v_j, \quad (i,j) \in \Ab.
    \end{align*}
    Additionally, we create a spanning tree $\Ab^s$ for each scenario $s\in\Sc$ and copies of the system's variables $z^s$ for every agent, denoted $z^s_i$, $i\in\Jc$. 
    \[
       z^s_i = z^s_j, \quad (i,j) \in \Ab^s,\; s \in \Sc.
    \]
    Then problem \eqref{p:two-stage-pre} can be rewritten as follows:
    \begin{align}
        \min ~&~ \sum_{i\in\Jc} f_i(x_i) + \sum_{i\in\Jc} c_i \eta_i, \label{p:obj} \\
        \text{s.t.} ~&~ \sum_{i\in\Jc} c_i \Big( \nu^\ell_i \theta_i + w^\ell_i - \eta_i \Big) = 0, \quad \ell\in \Bc, \label{p:constr1} \\
        ~&~ \theta_i \geq \sum_{s\in\Sc} p^s \xi^s_i (q^s_i)^\top y^s_i, \quad \xi_i \in\Ac_i, ~ i\in\Jc, \label{p:constr2} \\
        ~&~ \sum_{i\in\Jc} A_i x_i = 0, \label{p:constr3} \\
        ~&~ T^s_i x_i + \sum_{j\in\Jc}(W^s_j y^s_j + B^s z^s_j) = 0, \quad i\in\Jc, s\in\Sc, \label{p:constr4} \\
        ~&~ (u^s)^\top \sum_{j\in\Jc} c_i z^s_i = \sum_{j\in\Jc}   r_j^s, \quad  s\in\Sc,\label{p:two-stage-syscost}\\
        ~&~ v_i = v_j, \quad (i,j) \in \Ab, \label{p:constr5} \\
        ~&~ z_i^s = z_j^s, \quad (i,j) \in \Ab^s, \quad s\in\Sc\label{p:constr5zizj}\\
        ~&~ x_i \in \Xc_i, \;\; y^s_i \in \Yc^s_i, \;\; z_i^s\in Y^s,\;\; r_i^s \geq 0,\;\; w_i^\ell\geq 0, \quad i\in\Jc, s\in\Sc, \ell\in\Bc. \label{p:constr6}
    \end{align}
    We assign Lagrange multipliers $\lambda\in\Rb^{|\Bc|}$, $\alpha\in\Rb^{d_1}$, $\beta\in\Rb^{d_2}$, $\gamma\in\Rb$, $\delta\in\Rb^{|\Ab|}$, and $\sigma^s\in\Rb^{|\Ab^s|}$, $s\in\Sc$ to the coupling constraints \eqref{p:constr1}, \eqref{p:constr3}, \eqref{p:constr4}, \eqref{p:two-stage-syscost}, \eqref{p:constr5zizj}, \eqref{p:constr5}, and \eqref{p:constr5zizj}, respectively. 
    To further declutter notation, let us gather the primal variables in the vector $V_i=(x_i,v_i,Y_i,Z_i,R_i,\eta_i)$, $i\in\Jc$, and the dual variables $\zeta = (\alpha,\beta,\gamma,\delta,\sigma)$.

    The Lagrange function of problem \eqref{p:obj}--\eqref{p:constr6} takes on the form:
    \begin{equation}
    \label{p:LRfinal}
        \begin{aligned}
          \Lambda(V,\zeta) & = \sum_{i\in\Jc} (f_i(x_i)  + c_i \eta_i)
            + \sum_{\ell\in \Bc}  \lambda_\ell  \sum_{i\in\Jc}c_i(\nu_i^\ell \theta_i - \eta_i) + \langle \alpha, \sum_{i\in\Jc} A_i x_i \rangle \\
            & + \sum_{(i,j)\in\Ab} \delta_{ij} (v_i - v_j) + \sum_{s\in\Sc} p_s\bigg[ \sum_{i\in\Jc} \beta^s_i \Big( T^s_i x_i + \sum_{j\in\Jc} W^s_j y^s_j + B^s z_i^s\Big)  \\
            & + \gamma^s\big( \langle u^s, \sum_{i\in\Jc} c_i z_i^s\rangle - \sum_{i\in\Jc} r_i^s \big) + \sum_{(i,j)\in\Ab^s} \sigma_{ij}^s(z_i^s-z_j^s) \bigg].
        \end{aligned}
    \end{equation}
    The dual function, denoted $\Dc(\zeta)$ is given by 
    \begin{align*}
    \Dc(\zeta)  = 
    \min_{V,\theta} ~&~ \Lambda(V,\zeta)\\
             \text{s.t.} ~&~ \theta_i \geq \sum_{s\in\Sc} p^s \xi^s_i (q^s_i)^\top y^s_i, \quad \xi_i \in\Ac_i,\;  i\in\Jc, \\
            ~&~ x_i \in \Xc_i, \;\; y^s_i \in \Yc^s_i\!, \;\; z_i^s\in Y^s\!,\;\; r_i^s \geq 0,\;\; w_i^\ell\geq 0, \quad i\in\Jc, s\in\Sc, \ell\in\Bc. 
    \end{align*}
    The dual problem reads:
    \begin{equation}
      \label{p:dual_problem}  
      \max_{\zeta} \Dc(\zeta).
    \end{equation}
    For a given penalty parameter $\kappa > 0$, the augmented Lagrangian function is given by:
    \begin{equation}
        \begin{aligned}
            \Lambda_{\kappa_0} & (V,\zeta) 
            = \Lambda(V,\zeta)   + \frac{\kappa}{2} \sum_{\ell\in \Bc} \Big( \sum_{i\in\Jc} c_i \Big( \nu^\ell_i \theta_i + w^\ell_i - \eta_i \Big) \Big)^2 \\ 
            & + \frac{\kappa}{2} \bigg\| \sum_{i\in\Jc} A_i x_i \bigg\|^2 +  \frac{\kappa}{2} \sum_{(i,j)\in\Ab} \|v_i - v_j\|^2 
              + \frac{\kappa}{2} \sum_{s\in\Sc} \sum_{(i,j)\in\Ab^s} p_s\|z_i^s - z_j^s\|^2\\
            & + \frac{\kappa}{2} \sum_{s\in\Sc}  \sum_{i\in\Jc} p_s \bigg\| T^s_i x_i + \sum_{j\in\Jc} W^s_j y^s_j + B^s z_i^s\bigg\|^2 + \frac{\kappa}{2} \sum_{s\in\Sc}  p_s \Big\| \langle u^s,z_i^s \rangle -\sum_{j\in\Jc} r_i^s\Big\|^2. 
        \end{aligned}
        \label{p:global-lagrangian}
    \end{equation}
    We denote $\Delta_i = \sum_{(i,j),(j^\prime,i)\in\Ab} (\delta_{ij} - \delta_{j^\prime i})$, $\pi_i^s = \sum_{(i,j),(j^\prime,i)\in\Ab^s} (\sigma_{ij}^s - \sigma_{j^\prime i}^s)$, and $\bar{\beta}_i = \sum_{j\in\Jc} \beta^s_j$. The global Augmented Lagrangian \eqref{p:global-lagrangian} is replaced by  \textit{local Augmented Lagrangians} $\Lambda^i_{\kappa_0}$ and every agent $i$ solves an optimization problem formulated as follows
    \begin{equation}
    \begin{aligned}
        \min_{V_i} ~&~ \Lambda^i_{\kappa_0}(V_i, \bar{V}, \zeta) \\
        \text{s.t.} ~&~ \theta_i \geq \sum_{s\in\Sc} p^s \xi^s_i (q^s_i)^\top y^s_i, \quad \xi_i \in\Ac_i, \\
        ~&~ x_i \in \Xc_i, \;\; y^s_i \in \Yc^s_i, \;\; z_i^s\in Y^s,\;\; r_i^s \geq 0,\;\; w_i^\ell\geq 0, \quad i\in\Jc, s\in\Sc, \ell\in\Bc, 
    \end{aligned}
    \label{p:new-ind-problem}
    \end{equation}
where the local Lagrangians have the form:
    \begin{align*}
        \Lambda_{\kappa}^i & (V_i,\bar{V},\zeta) 
        = f_i(x_i) + c_i \eta_i
        + \sum_{\ell\in \Bc} \lambda_\ell c_i \Big( \nu^\ell_i \theta_i + w^j_i - \eta_i \Big) + \langle \alpha, A_i x_i \rangle + \Delta_i v_i\\ 
        & + \sum_{s\in\Sc} p_s\beta^s_i (T^s_i x_i + B^s z_i^s) 
         + \sum_{s\in\Sc} p_s\bar{\beta}^s W^s_i y^s_i  + \sum_{s\in\Sc} p_s \gamma^s \big(c_i\langle u^s, z_i^s\rangle - r_i^s \big) \\ 
        & + \sum_{s\in\Sc} p_s \pi_i^sz_i^s + \frac{\kappa}{2} \sum_{(i,j)\in\Ab} \|v_i - \bar{v}_j\|^2  
         + \frac{\kappa}{2} \sum_{\ell\in \Bc } \Big( c_i \nu^\ell_i \theta_i + w^\ell_i - \eta_i  + \sum_{\substack{k\in\Jc\\k\neq i}} c_k\nu^\ell_k \bar{\theta}_k  \Big)^2 \\
        &
        + \frac{\kappa}{2} \sum_{i\in\Jc} \sum_{s\in\Sc} p_s\Big\| T^s_i x_i + W^s_i y^s_i + B^sz_i^s + \sum_{\substack{j\in\Jc\\j\neq i}} W^s_j \bar{y}^s_j \Big\|^2 + \frac{\kappa}{2} \Big\| A_i x_i + \sum_{\substack{j\in\Jc\\j\neq i }} A_j \bar{x}_j \Big\|^2 \\ 
        & + \frac{\kappa}{2} \sum_{s\in\Sc}  p_s \Big\|\langle u^s,z_i^s \rangle -r_i^s - \sum_{\substack{j\in\Jc\\j\neq i}} \bar{r_i^s}\Big\|^2 
        + \frac{\kappa}{2} \sum_{s\in\Sc} \sum_{(i,j)\in\Ab^s} p_s\|z_i^s - z_j^s\|^2 .
    \end{align*}

    We propose the following decomposition method with parameters $\kappa >0$ associated with the augmented Lagrangian and $\tau>0$ for updates of the primal variables. 

\begin{minipage}{0.96\textwidth}
   \begin{tightitemize}
    \item[]   \vspace{0.2cm} \hrule \vspace{0.2cm}
                \textbf{Systemic Risk Optimization with Nonlinear Scalarization} 
                \vspace{0.2cm} \hrule \vspace{0.2cm}  
    \item[Step 0.] Set $k=1$ and define initial primal variables $V^1$ and dual variables $\zeta^1$.
    \item[Step 1.] For every $i\in\Jc$, solve problem \eqref{p:new-ind-problem} and denote its optimal solution $\bar{V}^k_i$.
    \item[Step 2.] For every $i\in\Jc$, update the primal variables
            \[
            V^{k+1}_i = V^k_i + \tau (\bar{V}^k_i - V^k_i).
            \]
    \item[Step 3.]
    If the coupling constraints \eqref{p:constr1}, \eqref{p:constr3}, \eqref{p:constr4}, \eqref{p:two-stage-syscost}, \eqref{p:constr5zizj}, \eqref{p:constr5}, and \eqref{p:constr5zizj} are satisfied at $V^{k+1}$, then stop. Otherwise, update the dual variables
        \begin{align*}
        & \lambda^{k+1}_\ell= \lambda^k_\ell + \kappa\tau \sum_{i\in\Jc} c_i \Big( \nu^\ell_i \theta^k_i + w^\ell_i - \eta^k_i \Big), \quad \ell \in \Bc, \\
        & \alpha^{k+1}= \alpha^k_i + \kappa\tau \sum_{i=1}^m A_i x^{k+1}_i, \\
        & \beta^{s,k+1}_i= \beta^{s,k}_i + \kappa\tau \bigg( T^s_i x^{k+1}_i + \sum_{j\in\Jc} W^s_j y^{s,k+1}_j \bigg), \quad i\in\Jc, s\in\Sc, \\
        & \gamma^{s,k+1}= \gamma^{s,k} + \kappa\tau \sum_{i\in\Ic} \big( c_i(u^s)^\top z_i^{s,k+1} -  r_i^{s,k+1} \big), \quad s\in\Sc, \\
        & \delta^{k+1}_{ij}= \delta^{t}_{ij} + \kappa\tau (v^{k+1}_i - v^{k+1}_j), \quad (i,j)\in\Ab, \\
        & \sigma^{s,k+1}_{ij}= \sigma^{s,k}_{ij} + \kappa\tau (z^{s,k+1}_i - z^{s,k+1}_j), \quad (i,j)\in\Ab^s,    
        \end{align*}
        and go to Step 1.
     \vspace{0.2cm} \hrule \vspace{0.2cm}
    \end{tightitemize}
    \end{minipage}

    We note that the solution of problem \eqref{p:new-ind-problem} in Step 1 can be accomplished by another decomposition method for a risk-averse two-stage problem as discussed in \cite{Risk-Multicut-Miller}. 

    \begin{proposition}
        Let $\Xc_i$, $\Yc^s_i$, and $Y^s$ be convex compact sets for all $i\in\Jc$ and $s\in\Sc$. If the parameter $\tau$ satisfies $0 < \tau < \frac{1}{m}$, the decomposition method for nonlinear risk scalarization converges to the optimal solution of the problem \eqref{p:obj}-\eqref{p:constr6}.    
    \end{proposition}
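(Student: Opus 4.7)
The plan is to recognize the scheme as a Jacobi-style parallel augmented Lagrangian method (a diagonal quadratic approximation applied simultaneously to all coupling families of \eqref{p:obj}--\eqref{p:constr6}) and to adapt the classical convergence argument of Ruszczy\'nski for such methods: establish a one-step descent inequality for the global augmented Lagrangian, then close the argument with the standard method-of-multipliers accumulation-point analysis.

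First I would verify the structural hypotheses needed to start the theory. The objective in \eqref{p:obj} is convex and continuous, and the feasible set of \eqref{p:obj}--\eqref{p:constr6} is compact: the decision variables $x_i, y_i^s, z_i^s$ lie in the given compact sets $\Xc_i, \Yc_i^s, Y^s$, the auxiliary variables $\theta_i, \eta_i, r_i^s$ inherit bounds from the linear couplings and the boundedness of the dual sets $\Ac_0, \Ac_i$ (which carries over from the continuity of the risk measures), and $w_i^\ell$ is likewise forced into a bounded interval by \eqref{p:constr1}. Hence an optimal primal-dual pair $(V^*, \zeta^*)$ of the ordinary Lagrangian $\Lambda$ exists, and $\Lambda_\kappa$ shares its saddle points for every $\kappa > 0$. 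Each local subproblem \eqref{p:new-ind-problem} is strongly convex in $V_i$ through the quadratic augmented terms, so the local minimizer $\bar V^k_i$ is unique.

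The technical core is the descent inequality. Writing each coupling residual in the abstract form $\sum_{j\in\Jc} M_j V_j$, where $M_j$ ranges over the blocks appearing in \eqref{p:constr1}, \eqref{p:constr3}, \eqref{p:constr4}, and \eqref{p:two-stage-syscost}, one compares the sum of the local penalties $\sum_{i\in\Jc} \tfrac{\kappa}{2}\|M_i V_i + \sum_{j\neq i} M_j V_j^k\|^2$ appearing in $\sum_i \Lambda_\kappa^i$ with the global penalty $\tfrac{\kappa}{2}\|\sum_{i\in\Jc} M_i V_i\|^2$ appearing in $\Lambda_\kappa$. The Jensen bound $\|\sum_{i=1}^m u_i\|^2 \le m\sum_{i=1}^m \|u_i\|^2$, combined with the first-order optimality of each $\bar V_i^k$ in its local problem and the convex combination $V^{k+1} = V^k + \tau(\bar V^k - V^k)$, yields the one-step estimate
\[
\Lambda_\kappa(V^{k+1}, \zeta^k) \le \Lambda_\kappa(V^k, \zeta^k) - \kappa\tau(1 - m\tau)\,\|\bar V^k - V^k\|_M^2,
\]
where $\|\cdot\|_M$ is the positive semidefinite seminorm induced by the coupling matrices. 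The bound $\tau < 1/m$ makes $1 - m\tau > 0$, giving strict descent whenever $\bar V^k \neq V^k$.

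With the descent lemma in hand, the remainder follows a familiar pattern. Telescoping over $k$ against the boundedness of $\Lambda_\kappa$ on the compact feasible set gives $\sum_k \|\bar V^k - V^k\|_M^2 < \infty$, hence all coupling residuals vanish along the sequence, and Step 3 is precisely a subgradient step on the concave dual $\Dc$ with vanishing subgradient norm. Any accumulation point $(V^\infty, \zeta^\infty)$---which exists by compactness---satisfies the KKT conditions of \eqref{p:obj}--\eqref{p:constr6}: stationarity in $V_i$ from the optimality of $\bar V_i^k$ in Step 1, primal feasibility from vanishing residuals, dual nonnegativity from the local constraints, so $V^\infty$ is optimal. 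The principal obstacle is the bookkeeping in the descent inequality, because problem \eqref{p:obj}--\eqref{p:constr6} carries several heterogeneous coupling families: the full first-stage link \eqref{p:constr3}, the per-scenario dynamics \eqref{p:constr4}, the systemic-cost allocation \eqref{p:two-stage-syscost}, and the spanning-tree consensus constraints \eqref{p:constr5}--\eqref{p:constr5zizj}. One must verify that the worst-case Jensen factor is uniformly $m$ and no larger; the tree-based consensus constraints couple only pairs of agents and contribute a factor of $2 \le m$, so the binding families are the ones summing over all of $\Jc$, for which the factor is exactly $m$. This uniform verification across coupling families is the main technical effort.
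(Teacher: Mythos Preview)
Your plan is essentially the ADAL convergence argument re-derived from scratch, and that is precisely what the paper invokes: its proof simply cites Theorem~2 of \cite{ADAL} (Chatzipanagiotis--Dentcheva--Zavlanos) for dual convergence under the step-size bound $0<\tau<1/m$, and then devotes the remainder of the argument to bootstrapping boundedness of the auxiliary primal iterates $\theta_i^k$, $\eta_i^k$, $w_i^{\ell,k}$, $r_i^{s,k}$ one at a time. So the underlying mechanism is the same; the difference is that the paper treats the descent machinery as a black box whereas you propose to open it. Your observation that the binding Jensen factor is exactly $m$ (the tree-consensus constraints coupling only pairs contribute at most $2\le m$) is the correct reason the stepsize threshold is $1/m$ and not something smaller.

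One point in your outline deserves more care. You argue that $\theta_i, \eta_i, w_i^\ell, r_i^s$ ``inherit bounds from the linear couplings'' and then invoke ``boundedness of $\Lambda_\kappa$ on the compact feasible set'' when telescoping. But the iterates $V^k$ do not lie in the feasible set of \eqref{p:obj}--\eqref{p:constr6}: the coupling constraints are relaxed, and the local feasible set in \eqref{p:new-ind-problem} places no upper bound on $\theta_i$, $\eta_i$, or $w_i^\ell$. The paper handles this in the opposite order: it first uses the cited ADAL theorem to obtain dual convergence and vanishing coupling residuals, and \emph{then} argues that, because $\eta_i$ carries a positive coefficient in the objective and $w_i^\ell\ge 0$, the minimization forces $\theta_i^k$ to equal the active risk value $\varrho_i[\langle Q_i,Y_i^k\rangle+R_i^k]$ and $\eta_i^k$ to equal $\max_{\ell\in\Bc}\sum_i c_i\nu_i^\ell\theta_i^k$ asymptotically; boundedness then follows from compactness of $\Yc_i^s$ and $Y^s$, and $0\le r_i^{s,k}\le (u^s)^\top z_i^{s,k}$ handles the remaining variable. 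If you want a fully self-contained proof you should either confine the auxiliary variables to an a priori large box and check it never binds, or reorder your argument to match the paper's: descent $\Rightarrow$ residual vanishing $\Rightarrow$ boundedness of auxiliary iterates $\Rightarrow$ accumulation-point optimality.
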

    \begin{proof}
        By the convergence properties of ADAL defined in Theorem 2 in \cite{ADAL}, if the stepsize $\tau$ satisfies $0 < \tau < \frac{1}{m}$, the algorithm converges to the optimal solution of the dual problem to \eqref{p:obj}-\eqref{p:constr6}; it generates a sequence of dual variables $\{\lambda^{k},\alpha^k,\beta^{s,k},\gamma^{s,k}, \delta^k, \sigma^{s,k} \}$, $s\in\Sc$ converging to the optimal solution of it.
        The sequences $\{ x_i^k, y_i^{s,k}, z_i^{s,k}\}$ are bounded for all $i\in\Jc$  and hence, they each have an accumulations point  
        $(\hat{x}_i, \hat{y}_i{s}, \hat{z}_i^{s})$. Since the constraints are satisfied in the limit, we have 
        $\eta_i^k  \approx \eta_j^k $ for all $i,j\in\Jc$ and 
        $\sum_{i\in\Jc} c_i \nu^\ell_i \theta_i^k + w^{\ell,k}_i \approx \eta_i^k$.  Due to the constraint $w^{\ell,k}_i\geq 0$ and $\eta_i^k$ being minimized, we conclude that $\theta_i^k$ is also minimized indirectly. Hence, $\theta_i^k = \sum_{s\in\Sc} p^s \xi^s_i (q^s_i)^\top y^s_i$ for some $\xi_i \in\Ac_i$. This implies that the sequences $\{\theta_i^k\}$ also have accumulation points $\hat{\theta}_i$  for all $i\in\Jc$. By the same argument $\sum_{i\in\Jc} c_i \nu^\ell_i \theta_i^k = \eta_i^k$ for some $\ell \in \Bc$. This in turn entails that the sequences $\{\eta_i^k\}$have accumulation points $\hat{\eta}_i$ for all $i\in\Jc$. Returning to constraint \eqref{p:constr1}, we infer the existance of accumulation points for the sequences $\{ w^{\ell,k}_i\}$,  $i\in\Jc$. Finally, we observe that $0\leq r_i^{s,k}\leq (u^s)^\top z_i^{s,k}$. The boundedness of the variables $z_i^{s,k}$ enforces that those sequences are bounded as well, which entails that $\{ r_i^{s,k}\}$ have accumulation points for all $i\in \Jc.$ 
        Hence, the sequence of the entire set of primal variables $\{ x_i^k, v_i^k, y_i^{s,k}, z_i^{s,k}, r_i^{s,k}, \eta_i^k,\theta_i^k \}$, $i\in\Jc$ generated by this algorithm has an accumulation point, which we can ensure by passing to subsequences.  Any such point is an optimal solution of \eqref{p:obj}-\eqref{p:constr6} due to the properties of the augmented Lagrangian method.
    \end{proof}

\section{Numerical Experiments}
\label{s:numerical_results}

    We apply the proposed numerical method to a disaster management problem similar to the two-stage problem considered in \cite{Noyan-disaster-problem}. The disaster management problem deals with the optimal allocation of resources at select facilities to minimize the damage caused by a random disaster. In the first stage of the problem, we decide the amount of resources to be pre-allocated at some facility locations. Once a disaster occurs at a random location, the resources can be moved between the facilities to satisfy the realized demand. 

    Let $\Jc$ denote the set of facility locations (nodes) indexed by $i$ and $\Nc_i$ denote the set of neighboring nodes of $i\in\Jc$. First, we determine the amount of resources $r_i$ to be positioned at each location $i \in \Jc$: the \emph{here-and-now} decisions. The cost associated with allocating one resource unit at location $i \in \Jc$ is denoted by $d_i$. A limited budget of $M>0$ resource-units is given.

    Once we observe the location of the disaster and a random demand $D_i$ for resources at locations $i\in\Jc$, we decide on the redistribution of resources. We approximate the uncertainty by a set $\Sc$ of $N$ scenarios, i.e., $\Sc = \{1,\dots,N\}$. Once the location of the disaster and the corresponding demand levels at the facilities are observed, we make the following decisions: $x^{s}_{ij}$ is the amount of resources shipped from $i\in\Jc$ to $j\in\Nc_i$, $u^{s}_i$ is the amount of resources not used at location $i\in\Jc$, and $z^{s}_i$ is the amount of shortage of resources at locations $i\in\Jc$ in scenario $s\in\Sc$. The cost of shipping one unit from $i\in\Jc$ to $j\in\Nc_i$ is given by $a_{ij}^{s}$, unit salvage cost for a resource is $h_i$, and unit penalty cost for the shortage is $b_i$ for $i\in\Jc$. Then, every node $i\in\Jc$ minimizes its cost given as
    \[
        Q^s_i = \sum_{j\in\Jc} a^{s}_{ij} x^{s}_{ij} + h_i u^{s}_i + b_i z^{s}_i.
    \]
    The proportion of pre-allocated resources that can be used is given by $\gamma^s_i \in [0,1]$ for $i\in\Jc$ and the number of resources that can be sent from $i\in\Jc$ to $j\in\Nc_i$ is limited by the maximum capacity of the link $U^s_{ij}$. 
    
    We use the mean semi-deviation risk measure for the individual risk measurement as well as in the systemic risk aggregation. 
    The disaster management problem is then formulated as follows:
    \begin{align}
        \min ~&~ \sum_{i\in\Jc} d_i r_i + \sum_{i\in\Jc} c_i (\theta_i + \varkappa_0 \vartheta_i) \label{ch6:num:p-objective} \\
        \text{s.t.} ~&~ \vartheta_i \geq \theta_i - \sum_{j\in\Jc} c_j \theta_j, \quad i\in\Jc, \label{ch6:num:p-risk-shortfalls} \\
        ~&~ \theta_i = \sum_{s\in \Sc} p^s Q^s_i + \varkappa_i \sum_{s\in \Sc} p^s v^s_i, \quad i\in\Jc, 
        \label{ch6:num:p-risk-vals} \\
        ~&~ v^s_i \geq Q^s_i - \sum_{\ell\in\Sc} p^\ell Q^\ell_i, \quad i\in\Jc, \; s\in\Sc, \label{ch6:num:p-shortfalls} \\
        ~&~ \gamma^s_i r_i + \sum_{j\in\Jc} x^s_{ji} - \sum_{j\in\Jc} x^s_{ij} - D^s_i = u^s_i - z^s_i, \quad i\in\Jc, \; s\in\Sc, \label{ch6:num:p-flow-constr} \\
        ~&~ \sum_{i\in\Jc} r_i \leq M, \label{ch6:num:p-max-resources} \\
        ~&~ 0\leq x^s_{ij} \leq U^s_{ij}, \quad i\in\Jc, \; j \in \Nc_i, \; s\in\Sc, \\
        ~&~ z^s_i \geq 0, \; u^s_i \geq 0, \; r_i \geq 0, \; \vartheta_i\geq 0\;  v_i^s\geq 0,    \quad i\in\Jc, \; s\in\Sc. 
        \label{ch6:dis-problem}
    \end{align}
    In this case, \eqref{ch6:num:p-risk-shortfalls}, \eqref{ch6:num:p-flow-constr}, and \eqref{ch6:num:p-max-resources} are coupling constraints and need to be relaxed using Lagrange multipliers. 

   The Lagrangian relaxation takes on the form:
    \FloatBarrier\begin{align*}
        \min ~&~ \sum_{i\in\Jc} \bigg[ d_i r_i +  c_i (\theta_i + \varkappa_0 \vartheta_i) + \alpha\big(r_i -c_iM) +
                 \beta_i(\theta_i - \vartheta_i ) - c_i\theta_i \sum_{j\in\Jc}\beta_j  \\
             ~&~ \sum_{s\in \Sc} \Big(\delta_i^s(\gamma^s_i r_i - D^s_i +z^s_i - u^s_i) + \Big(\delta_i^s - \sum_{j\in\Nc_i}\delta_j^s\Big)x^s_{ji} + \Big(\sum_{j\in \Nc_i}\delta_j^s - \delta_i^s\Big)x^s_{ij}\Big) \bigg]\\
         \text{s.t.} 
        ~&~ \theta_i = \sum_{s\in Sc} p^s Q^s_i + \varkappa_i \sum_{s\in Sc} p^s v^s_i, \quad i\in\Jc, \\
        ~&~ v^s_i \geq Q^s_i - \sum_{t=1}^N p^t Q^t_i, \quad i\in\Jc, \; s\in\Sc, \\
        ~&~ 0\leq x^s_{ij} \leq U^s_{ij}, \quad i\in\Jc, \; j \in \Nc_i, \; s\in\Sc, \\
        ~&~ z^s_i \geq 0, \; u^s_i \geq 0, \; r_i \geq 0, \; \vartheta_i\geq 0, \;  v_i^s\geq 0, \quad i\in\Jc, \; s\in\Sc.    
    \end{align*}
    Evidently, it splits into subproblems for each location $i\in\Jc$.

    We solve the problem for a network of $5$ facilities at fixed locations in a $1 \times 1$ square map. To generate scenarios, we pick a random location in the map where a disaster occurs and calculate the demand levels at the facilities depending on the disaster location. We assume that the facilities closer to the center of the disaster have a higher demand for resources and vice versa. We calculate the demand at location $i\in\Jc$ in scenario $s\in\Sc$ as follows:
    \[
        D^s_i = \frac{\nu_1}{1+e^{\nu_2 \|\Delta^s_{i}\|}},
    \]
    where $\nu_1$ and $\nu_2$ are some constants, and $\|\Delta^s_i\|$ is the distance between the facility and the location of the disaster in scenario $s$. In our experiments, we set $\nu_1 = 20$, $\nu_2 = 2$, and the arc capacity values $U_{ij} = 1.5$ for all $i\in\Jc$ and $j\in\Nc_i$. We set the cost values as $d_i = 0$, $h_i = 5$, $b_i = 5$ for all $i\in\Jc$ and  $a_{ij} = 1$ for $i\in\Jc, j\in\Nc_i$. The amount of total resources available is $M = 25$ and $\gamma^s_i = 0.95$ for all $i\in\Jc$ and $s\in\Sc$. The scalarization vector $c$ is fixed, and its components are set $c_i = \frac{1}{5}$ for all $i\in\Jc$.

    We solve the problem for $10, 50$, and $100$ scenarios using two risk aggregation methods. First, we aggregate the individual risk measures $\varrho_i$ using the weighted summation. This aggregation corresponds to setting $\varrho_0[\cdot] = \Eb[\cdot]$. To solve this problem, we set $\varkappa_0 = 0$, remove the first constraint in \eqref{ch6:dis-problem} with its corresponding Lagrange multipliers, and implement the proposed numerical method. Next, we solve the problem by setting $\varrho_0[\cdot]$ as the mean-upper-semideviation of order $1$ with $\varkappa_0 = 0.5$. All individual risk measures $\varrho_i[\cdot]$, $i\in\Jc$ are also set to be mean-upper-semideviation of order $1$ with $\varkappa_i = 0.5$. The method is implemented in Python using Gurobi solver on a PC with a 10-core CPU and 3.2 GHZ.

    The results are summarized in Tables~\ref{tab:individual_risk_values_and_the_total_risk_for_aggregation_of_risks} and \ref{tab:optimal_allocation_of_resources_for_two_aggregation_methods}. The notation $\varrho_c$ refers to the risk measure defined in 
    \eqref{rho_max_S} with $S=\{c\}$  and $c =c_i = \frac{1}{5}$ for $i=1,\dots,5$. Linear scalarization in Table~\ref{tab:optimal_allocation_of_resources_for_two_aggregation_methods} refers to the evaluation of total risk by the measure $\varrho_c$.  
\begin{center}
    \begin{table}
        \begin{tabular}{|c|c|c|c|c|c|c|c||c|}
            \hline
            N & $\varrho_0$ & $\varrho_1$ & $\varrho_2$ & $\varrho_3$ & $\varrho_4$ & $\varrho_5$ & $\varrho_\sys$ & $\varrho_c$ \\ \hline
             10 & Expected Value & {20.49} & 9.44 & {6.27} & 16.46 & 6.84 & {11.90} & {11.36} \\ 
            & Mean-Upper-Semidevation & {13.67} & 12.35 & 12.35 & 12.35 & {11.03} & {12.48} & \\ \hline
            50 & Expected Value & {16.41} & 15.37 & {8.01} & 16.4 & 12.28 & {13.69} & {13.16} \\ 
            & Mean-Upper-Semidevation & 13.77 & 13.77 & 13.77 & 13.77 & 13.77 & {13.77} & \\ \hline
            100 & Expected Value & {17.16} & 17.09 & {8.55} & 14.81 & 11.09 & {13.74} & {13.12} \\ 
            & Mean-Upper-Semidevation & 13.79 & 13.79 & 13.79 & 13.79 & 13.79 & {13.79} & \\ \hline
        \end{tabular}
        \caption{Individual risk values and the total risk for aggregation methods
         using the weighted sum (Expected Value), nonlinear aggregation of risks (Mean-Upper-Semideviation), and an additional column for the total risk evaluation by $\varrho_c(\cdot)$.}
         \label{tab:individual_risk_values_and_the_total_risk_for_aggregation_of_risks}
    \end{table}
       \begin{table}
        \centering
        \begin{tabular}{@{}|c|c|c|c|c|c|c|@{}} 
            \hline
            N &  Aggregation & $r_1$ & $r_2$ & $r_3$ & $r_4$ & $r_5$ \\ \hline \hline
            10 & $\varrho_0$ is Expected Value & 3.57 & 5.33 & 6.22 & 4.85 & 5.04 \\ 
            &$\varrho_0$ is Mean-Upper-Semidevation & 5.77 & 4.49 & 4.66 & 5.23 & 4.85 \\ 
            & $\varrho_c$  Linear Scalarization & 3.43 & 5.10 & 6.33 & 4.99 & 5.16 \\ \hline
            50 & $\varrho_0$ is Expected Value & 3.75 & 4.33 & 6.77 & 4.67 & 5.49 \\ 
            & $\varrho_0$ is Mean-Upper-Semidevation & 4.46 & 4.89 & 5.59 & 4.85 & 5.21 \\ 
            & $\varrho_c$  Linear Scalarization & 3.58 & 4.52 & 6.58 & 4.84 & 5.49 \\ \hline
            100 & $\varrho_0$ is Expected Value & 3.82 & 4.02 & 6.46 & 5.01 & 5.70 \\ 
            & $\varrho_0$is Mean-Upper-Semidevation & 4.70 & 4.69 & 5.55 & 5.01 & 5.05 \\ 
            & $\varrho_c$  Linear Scalarization & 3.81 & 4.19 & 6.21 & 5.30 & 5.48 \\ \hline
        \end{tabular}
        \caption{Optimal allocation of resources for two risk aggregation methods and linear scalarization.}
        \label{tab:optimal_allocation_of_resources_for_two_aggregation_methods}
    \end{table} 
\end{center}
 We observe that solving the problem using the weighted sum of the individual risks leads to a wide range of risk values at the facilities. For example, for $N=10$, the individual risk values range from $6.84$ to $20.49$, and the total risk of the system is $11.90$. Meanwhile, using the mean-upper-semideviation of order $1$ to aggregate the risk measures results in a smaller difference between individual risk values, ranging from $11.03$ to $13.67$, with a total risk value of $12.615$. Even though the risk of the system increased by a small amount, the proposed risk aggregation method introduced fairness among the members of the system. The same effect is observed for $50$ and $100$ scenarios.
    We also emphasize that the risk measures provide robustness to model uncertainty and approximations, which is evidenced by their dual representation. 
    This issue is important in stochastic optimization as we approximate the relevant distributions by finite number of scenarios and our models are frequently data-driven. 

    The numerical performance of the method for those experiments is reported in the appendix.

\section{Conclusions}

We provide a framework for risk evaluation and control for complex distributed stochastic systems, in which the evaluation of risk of the system's components is based on confidential and/or proprietary information. The proposed approach facilitates fair risk allocation to agents or units.
 
The systemic risk measures are based on a sound axiomatic foundation while also enjoying constructions that facilitate risk-averse decision-making by distributed numerical methods. We have provided new theoretical results regarding the form of the subdifferential of the systemic measures involving the non-linear risk aggregation.

We formulate a two-stage decision problem for a distributed system using a systemic measure of risk with non-linear risk aggregation. A new decomposition method for solving the problem is provided. 

We apply the method to a problem arising in disaster management, where we have paid attention to maintain fair risk allocation to all affected locations. Our numerical results show the viability of the proposed methodology. The most important observation of our numerical experiments is that the nonlinear aggregation of individual risks ensures fairness in risk allocation while at the same time providing robustness to the inaccuracies of the uncertainty model.

\section*{Acknowledgements}
The authors thank the Associate Editor and the anonymous referees whose comments and suggestions helped improve the paper. 
The support of the Air Force Office of Scientific Research under award FA9550-24-1-0284 is gratefully acknowledged.

\section*{Declarations}

\begin{itemize}
\item Conflict of interest/Competing interests: 
The authors declare that they have no conflict of interest.
\item Data availability: no specific data sets are used; the data for the numerical experiments is randomly generated as described in the text.
\end{itemize}

\begin{appendices}

\section{Data for the numerical performance of the decomposition method}

The performance of the decomposition method for two risk aggregations is given in Table \ref{table:results}. The convergence results for the weighted sum of the risk measures are shown in Figures \ref{fig:convergence-sum-risks-10} and \ref{fig:convergence-sum-risks-100} for $10$ and $100$ scenarios, respectively. Similarly, the convergence results for the aggregation of the risk measures using the mean-upper-semideviation of order $1$ are shown in Figures \ref{fig:convergence-nonlinear-risks-10} and \ref{fig:convergence-nonlinear-risks-100} for $10$ and $100$ scenarios. 

    \begin{table}
    \centering
        \begin{tabular}{|c|c|c|c|}
            \hline
            & & \textbf{Weighted sum} & \textbf{Mean-Upper-Semideviation} \\ \hline 
            N = 10 & $\kappa$ & 0.3 & 0.3 \\ 
            & $\kappa_0$ & 0.01 & 0.01 \\ 
            & Iterations & 1069 & 475 \\ 
            & Time, s & 474 & 213 \\ \hline 
            N = 50 & $\kappa$ & 0.3 & 0.3 \\ 
            & $\kappa_0$ & 0.01 & 0.01 \\ 
            & Iterations & 1282 & 2581 \\ 
            & Time, s & 2776 & 5567 \\ \hline 
            N = 100 & $\kappa$ & 0.3 & 0.3 \\ 
            & $\kappa_0$ & 0.005 & 0.005 \\ 
            & Iterations & 1962 & 2473 \\ 
            & Time, s & 8511 & 10617 \\ \hline
        \end{tabular}
        \caption{\small Numerical performance of the distributed method for two aggregation methods: $\varrho_0$ is the expected value (weighted sum) or $\varrho_0$ is the mean-upper-semideviation of order 1.}
        \label{table:results}
    \end{table}

It can be seen that solving the problem using the weighted sum of the individual risks leads to a wide range of risk values at the facilities. For example, for $N=10$, the individual risk values range from $6.84$ to $20.49$, and the total risk of the system is $11.90$. Meanwhile, using the mean-upper-semideviation to aggregate the risk measures results in a smaller difference between individual risk values, ranging from $11.03$ to $13.67$, with a total risk value of $12.615$. Even though the risk of the system increased by a small amount, the proposed risk aggregation method introduced fairness among the members of the system. The same effect can be observed for the case of $50$ and $100$ scenarios.
    \begin{figure}[H]
        \centering
        \subfloat[]{\includegraphics[width=0.45\textwidth]{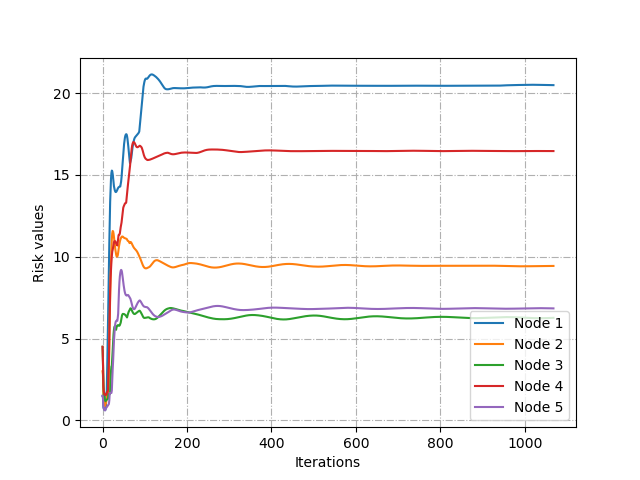}}
        \hfill
        \subfloat[]{\includegraphics[width=0.45\textwidth]{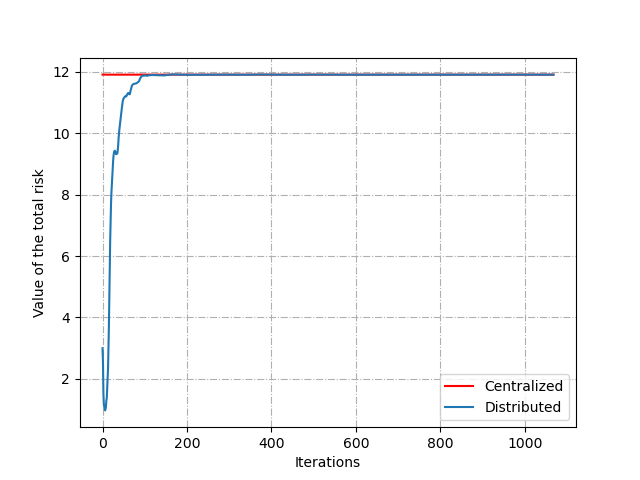}}
        \caption{(a) Evolution of the individual risk values $\varrho_i$ and (b) convergence of the total risk value to the centralized value. In this case, $\varrho_0$ is the expected value and $N = 10$.}
        \label{fig:convergence-sum-risks-10}
    \end{figure}
\vspace*{-10ex}
    \begin{figure}[H]
        \centering
        \subfloat[]{\includegraphics[width=0.45\textwidth]{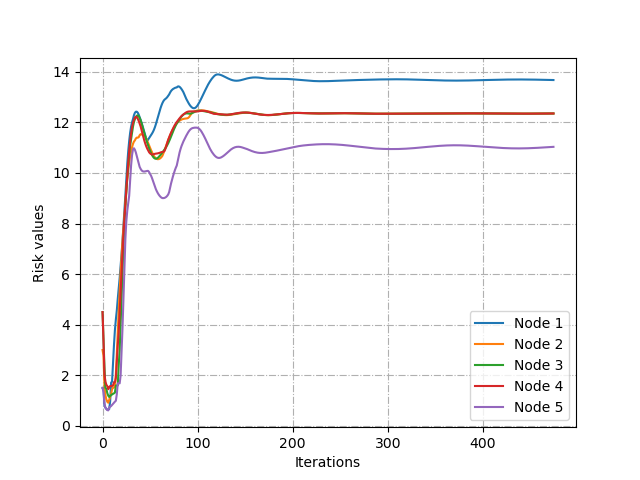}}
        \hfill
        \subfloat[]{\includegraphics[width=0.45\textwidth]{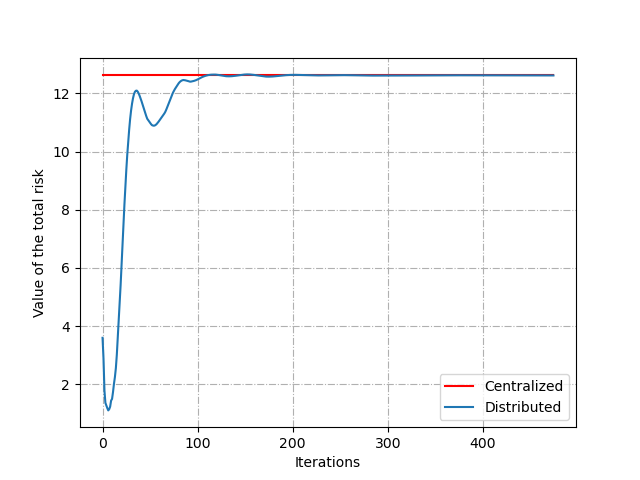}}
        \caption{(a) Evolution of the individual risk values $\varrho_i$ and (b) convergence of the total risk value to the centralized value. In this case, $\varrho_0$ is the mean-upper-semideviation of order 1 and $N = 10$.}
        \label{fig:convergence-nonlinear-risks-10}
    \end{figure}
\vspace*{-10ex}
    \begin{figure}[H]
        \centering
        \subfloat[]{\includegraphics[width=0.45\textwidth]{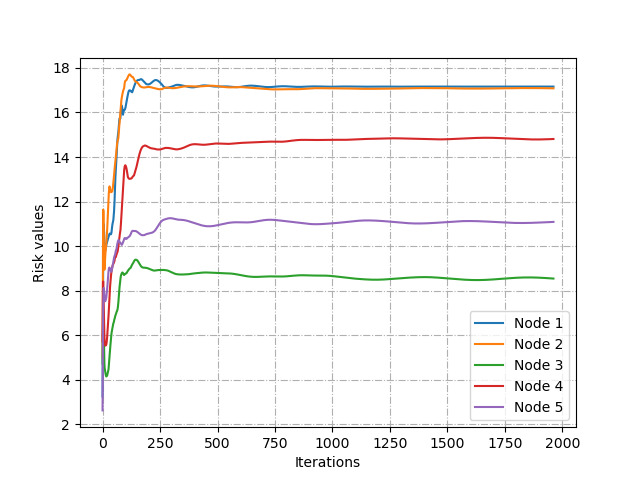}}
        \hfill
        \subfloat[]{\includegraphics[width=0.45\textwidth]{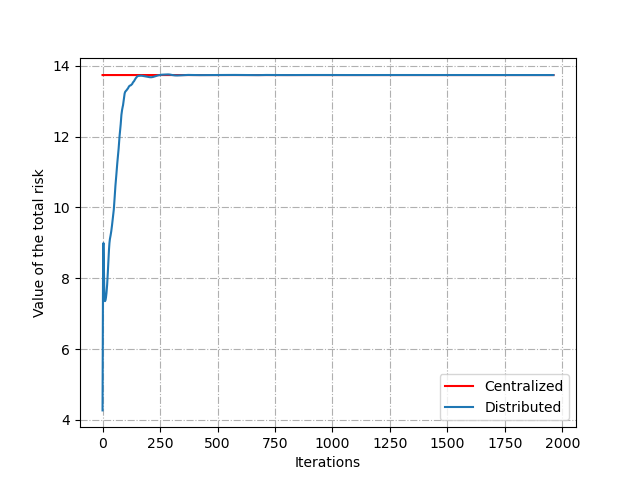}}
        \caption{(a) Evolution of the individual risk values $\varrho_i$ and (b) convergence of the total risk value to the centralized value. In this case, $\varrho_0$ is the expected value and $N = 100$.}
        \label{fig:convergence-sum-risks-100}
    \end{figure}
\vspace*{-10ex}
    \begin{figure}[H]
        \centering
        \subfloat[]{\includegraphics[width=0.45\textwidth]{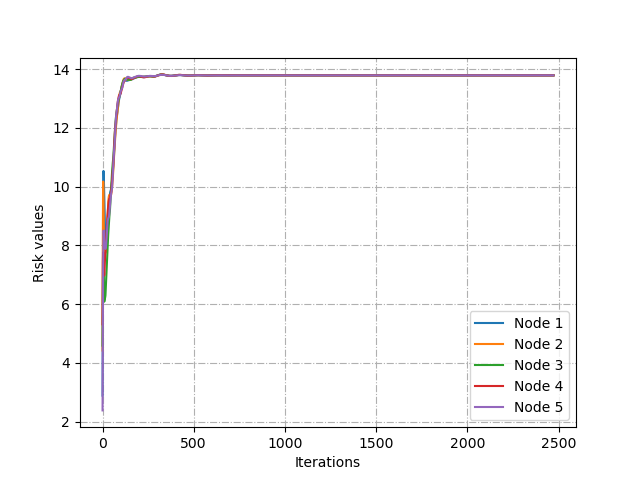}}
        \hfill
        \subfloat[]{\includegraphics[width=0.45\textwidth]{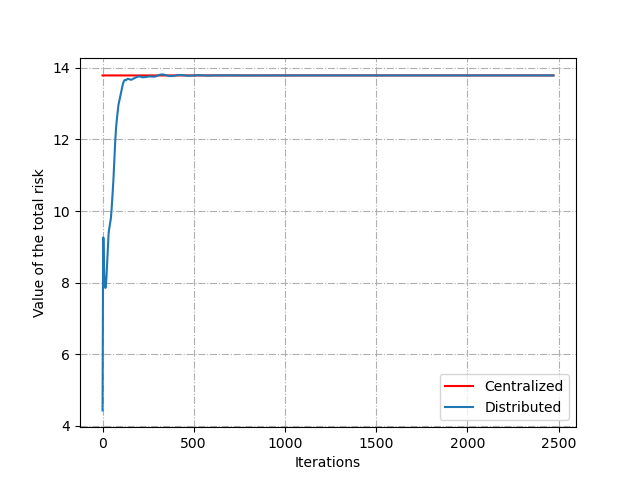}}
        \caption{(a) Evolution of the individual risk values $\varrho_i$ and (b) convergence of the total risk value to the centralized value. In this case, $\varrho_0$ is the mean-upper-semideviation of order 1 and $N = 100$.}
        \label{fig:convergence-nonlinear-risks-100}
    \end{figure}

\end{appendices}

\bibliographystyle{plainnat}

\end{document}